\newcommand{\nn}{\mathbb N}
\newcommand{\ee}{\mathbb E}
\newcommand{\pr}{\mathbb P}
\newcommand*{\bbrac}[1]{\bigl(#1\bigr)}
\newcommand*{\prb}[1]{\pr\bbrac{#1}}
\newcommand{\eps}{\varepsilon}
\newcommand*{\hier}[3][X]{#1^{(#2)}_{#3}}
\newcommand{\salj}{\mathcal{F}}
\newcommand{\Var}{\operatorname{Var}}
\newcommand*{\abs}[1]{\lvert#1\rvert}
\newcommand*{\babs}[1]{\bigl|#1\bigr|}
\newcommand*{\floor}[1]{\lfloor#1\rfloor}
\newcommand{\ie}{i.e.\ }
\theoremstyle{plain}
\newtheorem{theorem}{Theorem}
\newtheorem{lemma}[theorem]{Lemma}
\newtheorem{proposition}[theorem]{Proposition}
\theoremstyle{remark}
\newtheorem*{remark}{Remark}
\title{Non-convergence of proportions of types in a preferential attachment graph with three co-existing types}
\author{John Haslegrave\thanks{Research supported by the European Research Council (ERC) under
the European Union's Horizon 2020 research and innovation programme (grant agreement No.\ 639046).} \and Jonathan Jordan}
\begin{document}
\maketitle
{\centering\textit{Dedicated to the memory of Chris Cannings}\par}
\begin{abstract}We consider the preferential attachment model with multiple vertex types introduced by Antunovi\'{c}, Mossel and R\'{a}cz.
We give an example with three types, based on the game of rock-paper-scissors, where the proportions of vertices of the different types
almost surely do not converge to a limit, giving a counterexample to a conjecture of Antunovi\'{c}, Mossel and R\'{a}cz.
We also consider another family of examples where we show that the conjecture does hold.\end{abstract}

\textbf{Keywords:} preferential attachment; stochastic approximation; social networks; competing types.

\section{Introduction}

We consider a model for randomly growing networks of agents having different types, which are not innate but are chosen based on what they see when they join the network.
These types might represent social groups, opinions, or survival strategies of organisms. This model was introduced by Antunovi\'{c}, Mossel and R\'{a}cz \cite{AMR},
who define a family of preferential attachment random graphs where each new vertex receives one of a fixed number of types according to a probability distribution
which depends on the types of its neighbours. Using stochastic approximation methods, they fully deal with the case where there are two types of vertices, and show that the
proportions of the vertices which are of each type almost surely converge to a (possibly random) limit which is a stable fixed point of a certain one-dimensional differential equation.

The case where there are more than two types is discussed in Section 3 of \cite{AMR}.  They conjecture (Conjecture 3.2) that the behaviour is always similar to the two-type
model in that there is almost sure convergence to a limit which is a stationary point of what is now a multi-dimensional vector field.  They confirm this for the case they
call the linear model, where the probability each type is chosen is proportional to the number of neighbours of that type the new vertex has.  However, the difficulties of
a general analysis of the class of vector fields associated to models of this type make it hard for more general results to be proved.

In this note, we give a model with three types which is a counterexample to Conjecture 3.2 of \cite{AMR}.  The type assignment mechanism in this model is inspired by the
well-known game of ``rock-paper-scissors'', and we show that in this model the associated vector field does not have attractive stationary points and that the proportions
of the types do not converge to a limit, but approach a limit cycle of the associated vector field, so that each type in turn will have the largest proportion of the edges.

We also give a family of examples where we can show that Conjecture 3.2 of \cite{AMR} does hold.  This is where the new vertex chooses uniformly at random from those types
represented among vertices which it connects to.  In this case, we will show that there is a single stable stationary point of the vector field, which corresponds to
the proportions of each type being equal, and that almost surely the proportions of the types will converge to this point as $n\to\infty$.

\section{The Antunovi\'{c}-Mossel-R\'{a}cz framework}

The framework introduced by Antunovi\'{c}, Mossel and R\'{a}cz in \cite{AMR} considers a standard preferential attachment graph where the new vertex connects to $m$ existing vertices as originally suggested by Barab\'{a}si and Albert in \cite{scalefree1999}, with the different vertices chosen independently as in the ``independent model'' of \cite{bergerpa}.
That is, we consider a random sequence of graphs $G_0,G_1,\ldots$ starting from some non-empty fixed graph $G_0$.
For each $t>0$ we choose $m$ random vertices from $G_{t-1}$ independently and with replacement, with probabilities proportional to their degrees.
We then add a new vertex connected by $m$ edges to the chosen vertices (allowing multiple edges if a vertex is chosen more than once) to form $G_t$.

Each vertex is one of $N$ types $\{1,\ldots,N\}$, often referred to in \cite{AMR} as colours; once it has been assigned, the type of a vertex is fixed for all time.
When a new vertex joins the graph, it takes a type based on the types of its neighbours in the following general way, where the notation follows section 3 of \cite{AMR}.  The types of the $m$ neighbours induce a vector $\mathbf{u}\in \nn_0^N$ whose elements sum to $m$ and whose $i$th element is the number of neighbours of type $i$.  For each possible $\mathbf{u}$, we will have a probability distribution on $\{1,\ldots,N\}$, which we will describe by a vector $p_{\mathbf{u}} \in \Delta^{N-1}$, whose $i$th element gives the probability that the new vertex is of type $i$ given that $\mathbf{u}$ is the vector giving the numbers of its neighbours of each type.  Here $\Delta^{N-1}$ denotes the $(N-1)$-dimensional simplex.

The special case where $p_{\mathbf{u}}=\mathbf{u}/m$ is referred to in \cite{AMR} as the linear model.  For other cases, they show (Lemma 3.4) that the sequence of vectors $\mathbf{x}_n$ giving the normalised total degrees of the vertices of each type is a stochastic approximation process driven by a vector field $P$ which depends on the $p_{\mathbf{u}}$, that is we have \begin{equation}\label{stochapprox}\mathbf{x}_{n+1}-\mathbf{x}_{n}=\frac{1}{n}\left(P(\mathbf{x}_{n})+\boldsymbol{\xi}_{n+1}+\mathbf{R}_{n}\right),\end{equation} where the $\boldsymbol{\xi}_i$ satisfy the martingale difference condition $\ee(\boldsymbol{\xi}_{n+1}\mid\mathcal{F}_n)=\boldsymbol{0}$, with $\mathcal{F}_n$ being $\sigma(\mathbf{x}_0,\mathbf{x}_1,\ldots,\mathbf{x}_n)$, and the $\mathbf{R}_{i}$ are remainder terms satisfying $\sum_{i=1}^{\infty} \mathbf{R}_i/n <\infty$ almost surely.  As a result, understanding the vector field $P$ is an important step towards understanding the behaviour of the stochastic process and applying the general results on stochastic approximation processes found in, for example, Bena\"{\i}m \cite{benaim} and Pemantle \cite{pemantlesurvey}.

All of the results in \cite{AMR} hold provided that both types are represented in the starting graph, though their model does not in general require this.
In this paper, the models we consider all have the property that a new vertex can only take types which are represented in its neighbourhood, and so we will assume
throughout that all types are represented in the starting graph, since otherwise the missing types never appear and the model reduces to a simpler case.

\section{Rock-paper-scissors model}

We introduce a model in the framework of \cite{AMR} where the type assignment mechanism is based on the game of rock-paper-scissors.  Cyclic dominance systems of this basic form have been shown to naturally occur in a variety of organisms and ecosystems, ranging from colour morphisms of the side-blotched lizard \cite{side-blotched} to strains of \textit{Escherichia coli} \cite{bacteria}. Such patterns of dominance can explain biodiversity. Whereas simpler transitive relations necessarily have a single fittest phenotype which, in the absence of other factors, should eventually dominate, cyclic dominance allows for situations where no phenotype has an evolutionary advantage over all others and thus multiple phenotypes may persist.

Itoh \cite{itoh} investigated a simple Moran process based on the rock-paper-scissors game.
A population of fixed size consists of rock-type, scissors-type and paper-type individuals. At each time step two individuals meet and play rock-paper-scissors; the loser is removed and replaced with a clone of the winner. The population is assumed to be well-mixed, so meetings occur uniformly at random; in such a system one type must eventually take over the whole population.
Similar processes have also been studied in a more structured environment, such as sessile individuals interacting on the $2$-dimensional lattice (see e.g.\ \cite{RPS-lattice}). Here the limited range of interactions allows co-existence of types \cite{bacteria}. While a lattice model may closely approximate the interactions between bacteria growing \textit{in vitro}, neither a lattice nor a well-mixed model is a good representation of the heterogeneous social interaction networks that arise among more complex organisms; here a model incorporating preferential attachment is more realistic.

Our model has $N=3$ types and $m=2$, the types $1$, $2$ and $3$ corresponding to ``rock'', ``paper'' and ``scissors'' respectively.  If the two sampled vertices are of the same type, the new vertex takes that type, whereas if they are of different types they play a game of rock-paper-scissors, playing their type, and the new vertex takes the type of the winner.  In the notation above, we have
\begin{gather*}p_{(1,1,0)}=(0,1,0),\quad p_{(0,1,1)}=(0,0,1),\quad p_{(1,0,1)}=(1,0,0),\\
p_{(2,0,0)}=(1,0,0),\quad p_{(0,2,0)}=(0,1,0),\quad p_{(0,0,2)}=(0,0,1),\end{gather*}
and the vector field $P$, defined by (3.1) of \cite{AMR}, on the triangle $\Delta^2$ is given by the components
\begin{align*}
P_1(x,y,z) &= \frac{x}{2}(z-y) \\ P_2(x,y,z) &= \frac{y}{2}(x-z) \\ P_3(x,y,z) &= \frac{z}{2}(y-x).\end{align*}

\subsection{Limiting behaviour of the model}
Let $X_n$, $Y_n$ and $Z_n$ denote the total degrees of vertices of types $1$, $2$ and $3$ respectively, normalised to sum to $1$. By Lemma 3.4 of \cite{AMR}, $(X_n,Y_n,Z_n)$ follows a stochastic approximation process \eqref{stochapprox} on the triangle $\Delta^2$ driven by the vector field $P$ with the noise terms $\boldsymbol{\xi}_i$ bounded.

\begin{lemma}\label{ODEcirc}The product $xyz$ is constant on trajectories of $P$.\end{lemma}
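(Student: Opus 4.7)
The plan is to differentiate $xyz$ along a trajectory of $P$ and observe that the result vanishes identically. If $(x(t),y(t),z(t))$ satisfies $\dot{x}=P_1(x,y,z)$, $\dot{y}=P_2(x,y,z)$, $\dot{z}=P_3(x,y,z)$, then by the product rule
\[
\frac{d}{dt}(xyz) = \dot{x}\,yz + x\dot{y}\,z + xy\dot{z} = \tfrac{xyz}{2}\bigl[(z-y)+(x-z)+(y-x)\bigr],
\]
and the bracket is zero. The key algebraic feature being exploited is the cyclic antisymmetry: each $P_i/x_i$ is a difference of the other two coordinates, and summing these differences around the cycle telescopes to zero.

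I expect no real obstacle. The only thing worth a brief comment is that the factorisation $P_i = x_i \cdot (\text{something})$ is exactly what makes $\log x + \log y + \log z$ a conserved quantity, so that writing the derivative as $xyz \cdot \frac{d}{dt}\log(xyz)$ gives the cleanest presentation. I would then note (either here or in the next result) that since $x+y+z = 1$ is also preserved by $P$ (the three components of $P$ sum to zero), the trajectories of $P$ in $\Delta^2$ lie on the level curves of $xyz$ restricted to the simplex, which are closed curves around the centroid $(1/3,1/3,1/3)$; this prepares the ground for the limit-cycle picture claimed in the introduction.
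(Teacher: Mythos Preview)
Your proof is correct and is essentially identical to the paper's own argument: both compute $\frac{d}{dt}(xyz)=yzP_1+xzP_2+xyP_3$ and observe that the resulting expression vanishes by the telescoping sum $(z-y)+(x-z)+(y-x)=0$. The additional remarks about $\log(xyz)$ and the invariance of $x+y+z$ are sound and well motivated, though the paper defers the geometric discussion of the level curves to the paragraph following the lemma rather than including it in the proof itself.
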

\begin{proof}We have \[\frac{d}{dt}(xyz)=xyP_3(x,y,z)+xzP_2(x,y,z)+yzP_1(x,y,z)=0.\qedhere\]\end{proof}

The vector field has four stationary points: the corners of the simplex, which are saddle points, and $(1/3,1/3,1/3)$, which has eigenvalues
$\pm\frac{\mathrm{i}}{\sqrt{27}}$, making it an elliptic fixed point.  Together with Lemma \ref{ODEcirc} we can see that trajectories of $P$
circle the centre of the simplex on loops of constant $xyz$; some of these are shown in Figure \ref{trajectories}.

\begin{figure}
\centering\includegraphics[width=.5\textwidth]{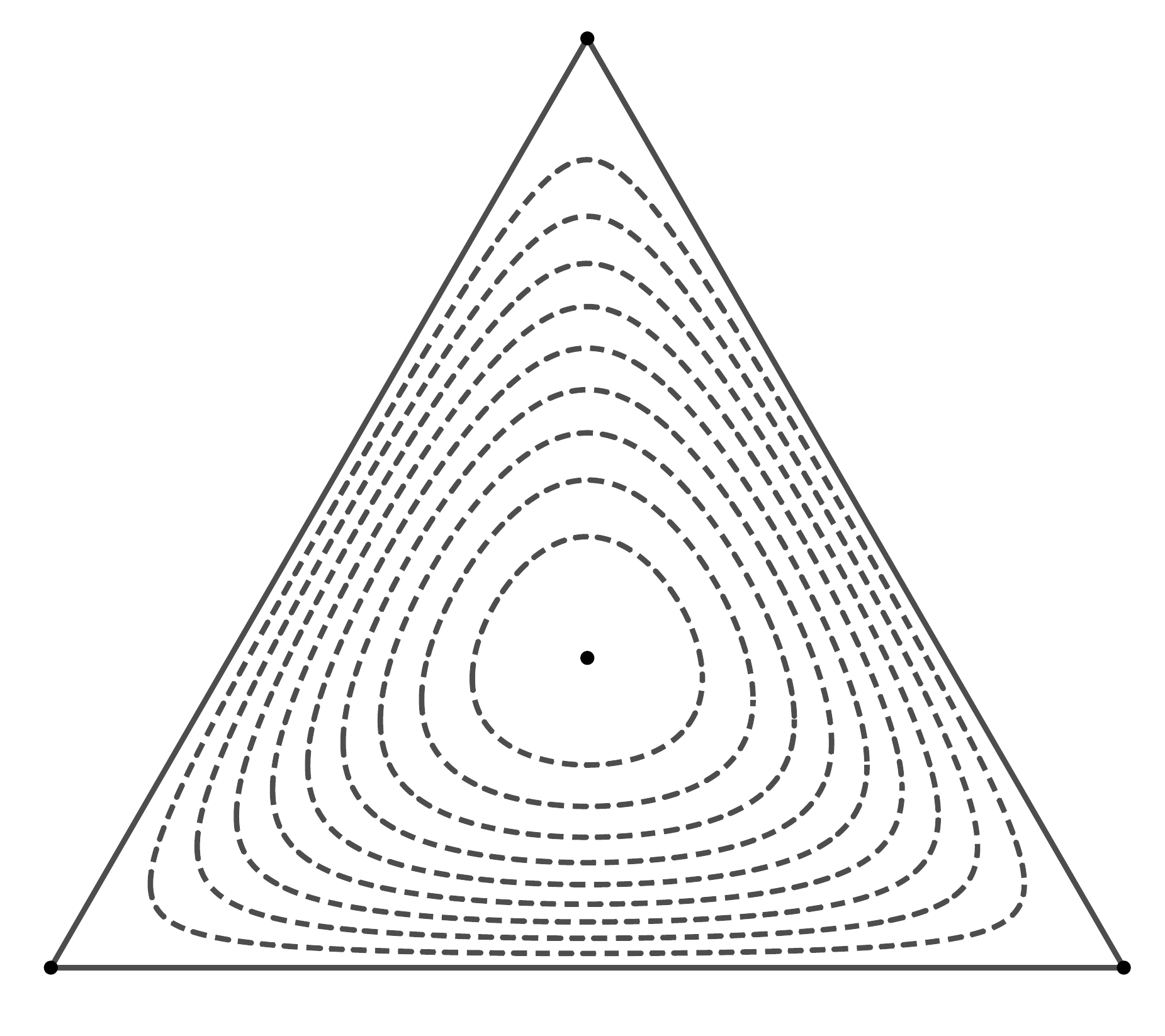}
\caption{Trajectories on which $27xyz$ is constant (ranging from $0.1$ to $0.9$).}\label{trajectories}
\end{figure}

Let $M_n=X_nY_nZ_n$. Our main result is the following.
\begin{theorem}\label{summary}The process $(M_n)_{n\in\nn}$ almost surely converges to a limit $M\in(0,1/27)$, and the distribution of $M$ has full support on $(0,1/27)$.  Furthermore, almost surely $(X_n,Y_n,Z_n)$ fails to converge; rather its limit set is the set $\{(x,y,z)\in\Delta^2:xyz=M\}$.\end{theorem}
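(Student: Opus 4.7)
The plan is to exploit Lemma \ref{ODEcirc}: since $f(x,y,z)=xyz$ is conserved along trajectories of $P$, the process $M_n=X_nY_nZ_n$ should be an approximate martingale. Writing the (exact) cubic Taylor expansion of $f$ about $\mathbf{x}_n$ and substituting \eqref{stochapprox}, the linear $P$-contribution vanishes by Lemma \ref{ODEcirc} (equivalently, $\nabla f\cdot P\equiv 0$), leaving an increment of the form
\[M_{n+1}-M_n = \frac{1}{n}\nabla f(\mathbf{x}_n)\cdot\boldsymbol{\xi}_{n+1} + \frac{1}{n}\nabla f(\mathbf{x}_n)\cdot\mathbf{R}_n + Q_n,\]
where $Q_n=O(1/n^2)$ bundles the quadratic and cubic contributions from the expansion. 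The first term is a bounded martingale difference of order $1/n$, so the associated martingale is $L^2$-bounded and converges a.s.; the remaining terms are absolutely summable by the hypothesis on $\mathbf{R}_n$ and by $\sum n^{-2}<\infty$. Since $0\le M_n\le 1/27$, we conclude $M_n\to M$ almost surely for some $M\in[0,1/27]$.

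The main task is to show $M\in(0,1/27)$ and that the distribution has full support. For $M=1/27$: this would force $(X_n,Y_n,Z_n)\to(1/3,1/3,1/3)$, but the Jacobian of $P$ there has purely imaginary eigenvalues and the noise is non-degenerate, so standard non-convergence results for stochastic approximation at non-attracting equilibria (e.g.\ \cite{pemantlesurvey}) exclude this. Ruling out $M=0$ is the principal technical difficulty. The boundary $\partial\Delta^2$ consists of three $P$-invariant edges, each flowing into a corner (e.g.\ on $z=0$ one has $P_1=-xy/2,\ P_2=xy/2$, so the flow runs from $(1,0,0)$ to $(0,1,0)$); the three corners are saddle points, so standard non-convergence-at-saddles results handle them directly. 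The remaining scenario---where the process asymptotically hugs an edge without converging to any single point---is blocked by comparing the inward $P$-drift of the small coordinate (linear in that coordinate near the edge) against the $O(1/n)$ noise: a Lyapunov argument on $\log Z_n$ (and symmetrically on $\log X_n$, $\log Y_n$) should yield $\liminf_n X_nY_nZ_n>0$ almost surely.

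For the full-support claim, fix $m^*\in(0,1/27)$ and $\eps>0$. I would establish a controllability estimate for the martingale increment $\frac{1}{n}\nabla f(\mathbf{x}_n)\cdot\boldsymbol{\xi}_{n+1}$: at any interior state the noise takes values with positive conditional probability in both signs of $\nabla f$, so a greedy steering/coupling argument drives $M_n$ into the interval $(m^*-\eps/2,m^*+\eps/2)$ in finitely many steps with positive probability, and then the a.s.\ convergence of $M_n$ traps the limit $M$ within $\eps$ of $m^*$. Hence $\prb{\abs{M-m^*}<\eps}>0$ for all $m^*\in(0,1/27)$ and $\eps>0$.

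Finally, to identify the limit set I would appeal to Bena\"{\i}m's general theorem \cite{benaim}: the limit set of $(X_n,Y_n,Z_n)$ is almost surely a compact, connected, internally chain-transitive set for the flow of $P$. Conditional on $M_n\to M$, it lies in the level curve $\gamma_M=\{(x,y,z)\in\Delta^2:xyz=M\}$, which for $M\in(0,1/27)$ is a single periodic orbit of $P$, and whose only non-empty internally chain-transitive subset is $\gamma_M$ itself. The main obstacle, as indicated, is the quantitative analysis near $\partial\Delta^2$ needed to rule out $M=0$, where three saddle corners and three invariant edges must be controlled simultaneously.
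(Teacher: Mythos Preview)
Your overall architecture---show $M_n$ converges via the conserved quantity $f(x,y,z)=xyz$, rule out the endpoints $0$ and $1/27$, then invoke Bena\"{\i}m to identify the limit set with the periodic orbit $\gamma_M$---matches the paper, and the convergence and Bena\"{\i}m steps are fine. The endpoint analysis, however, has two genuine gaps.

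\textbf{Ruling out $M=1/27$.} Your appeal to ``standard non-convergence results at non-attracting equilibria'' does not apply: the Jacobian at $(1/3,1/3,1/3)$ has \emph{purely imaginary} eigenvalues, so the point is elliptic, not linearly unstable. The Pemantle-type results you cite require an eigenvalue with strictly positive real part; a centre surrounded by periodic orbits is not covered. The paper instead exploits an exact computation: $\ee(M_{n+1}\mid\salj_n)=M_n\bigl(1-30\gamma_{n+1}^{-2}+56\gamma_{n+1}^{-3}\bigr)$, so $M_n$ is a \emph{strict} supermartingale with downward drift of order $n^{-2}$. Near the centre the martingale increments are shown to be $O(n^{-3/2})$ (Lemma~\ref{middle-variance}), and an Azuma--Hoeffding balance of this noise against the drift shows the process falls into, and stays in, an interval $(1/27-c_1/n,\,1/27-c_2/n)$ with probability bounded away from zero. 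Your Taylor expansion buries the drift in the sign-agnostic $O(1/n^2)$ remainder $Q_n$, so it does not supply this tool.

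\textbf{Ruling out $M=0$.} A Lyapunov argument on $\log Z_n$ will not work as stated: the drift of $\log Z_n$ is $(Y_n-X_n)/2$, which changes sign along the edge $z=0$ (negative near $(1,0,0)$, positive near $(0,1,0)$), reflecting the heteroclinic-cycle structure of $\partial\Delta^2$. Excluding convergence to each corner separately via saddle non-convergence does not by itself exclude the limit set being the whole boundary cycle. The paper sidesteps this entirely with a model-specific input: in preferential attachment the degree of each fixed initial vertex is almost surely $\Theta(\sqrt{\gamma_n})$, and since every type is present in $G_0$ this gives the a priori bound $M_n=\Omega(\gamma_n^{-1})$. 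Combined with the variance estimate $\Var(M_{n+1}\mid\salj_n)=O(M_n\gamma_n^{-2})$, one gets $\Var(M\mid\salj_n)=O(M_n^2)$, so $M\in(M_n/2,\,3M_n/2)$ with probability bounded away from zero, contradicting $\pr(M=0)>0$.
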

\begin{remark}
The failure to converge means that this model provides a counterexample to Conjecture 3.2 of \cite{AMR}.\end{remark}
Theorem \ref{summary} follows from the following three propositions, together with standard results on stochastic approximation processes.
\begin{proposition}\label{main}
The process $(M_n)_{n\in\nn}$ almost surely converges to a limit $M\in[0,1/27]$, and the distribution of $M$ has full support on $(0,1/27)$.
\end{proposition}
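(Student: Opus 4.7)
The proposition splits into almost-sure convergence of $M_n$ to some $M\in[0,1/27]$ and full support of the law of $M$ on $(0,1/27)$. \emph{Convergence.} Set $f(x,y,z)=xyz$; Lemma \ref{ODEcirc} is precisely the identity $\nabla f\cdot P\equiv 0$ on $\Delta^2$. Since the total degree grows linearly, $\mathbf{x}_{n+1}-\mathbf{x}_n=O(1/n)$ deterministically, and $f$ is smooth on the compact simplex, so Taylor's theorem gives
\[M_{n+1}-M_n=\nabla f(\mathbf{x}_n)\cdot(\mathbf{x}_{n+1}-\mathbf{x}_n)+O(1/n^2).\]
Substituting \eqref{stochapprox} and using $\nabla f\cdot P(\mathbf{x}_n)=0$ yields
\[M_{n+1}-M_n=\tfrac{1}{n}\nabla f(\mathbf{x}_n)\cdot\boldsymbol{\xi}_{n+1}+\tfrac{1}{n}\nabla f(\mathbf{x}_n)\cdot\mathbf{R}_n+O(1/n^2).\]
The first summand has conditional mean zero and is bounded by $C/n$ (since $\boldsymbol{\xi}_{n+1}$ is bounded), so its partial sums form an $L^2$-bounded martingale converging a.s.; the second is absolutely summable a.s.\ by the remainder condition for \eqref{stochapprox}; and the $O(1/n^2)$ term is summable deterministically. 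Hence $M_n$ converges a.s., and $M\in[0,1/27]$ by $xyz\leq\bigl((x+y+z)/3\bigr)^3=1/27$ (AM--GM on $\Delta^2$).

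\emph{Full support.} Fix $c\in(0,1/27)$ and $\eps>0$; the aim is $\pr(|M-c|<\eps)>0$, via a ``steer-then-pin'' argument. For pinning, applying the decomposition above from time $n$ onwards shows $M_k-M_n$ has $L^2$-norm $O(1/\sqrt n)$ for $k\geq n$, so Doob's $L^2$ inequality and the a.s.\ summability of remainder tails give
\[\pr\bigl(\sup_{k\geq n}|M_k-M_n|>\eps/2\bigm|\mathcal{F}_n\bigr)\longrightarrow 0\quad\text{as }n\to\infty,\]
uniformly in the history; choose $n_0$ so that this probability is at most $1/2$. For steering, I must then show that for some $n\geq n_0$ there is a positive-probability event on which $|M_n-c|<\eps/2$. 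I plan to do this by exhibiting a deterministic sequence of vertex additions, each of positive conditional probability: because every existing vertex has positive degree and all three types are always represented, the preferential attachment rule puts positive probability on any chosen pair of neighbour types, and the resulting vertex's type is then forced by the $p_{\mathbf{u}}$. Always selecting two vertices of the currently rarest type, or one of the rarest type together with one of the type it beats, drives $(X_n,Y_n,Z_n)$ towards $(1/3,1/3,1/3)$, so $M_n$ towards $1/27$; conversely, never selecting any vertex of one particular type for many consecutive steps drives the corresponding coordinate below any prescribed $\delta>0$, so $M_n<3\delta$. Interpolating between these regimes, and invoking continuity of $xyz$ on $\Delta^2$, one reaches any target value $c$ in between.

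The principal obstacle is the steering step: the convergence half reduces cleanly to Lemma \ref{ODEcirc}, and pinning is a routine martingale tail estimate, but steering demands an explicit construction one must verify has strictly positive (if exponentially small) probability at every stage, and whose endpoint really does satisfy $|M_n-c|<\eps/2$. The preferential attachment rule biases future choices via the evolving degree distribution, but since every positive-degree vertex has positive selection probability, this ultimately reduces to a combinatorial bookkeeping exercise once the favourable recipe is written down.
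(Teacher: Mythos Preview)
Your argument is correct and parallels the paper's proof, but the two routes to convergence differ in an instructive way. The paper computes $\ee(M_{n+1}\mid\mathcal F_n)$ explicitly, obtaining the closed formula
\[\ee(M_{n+1}\mid\mathcal F_n)=M_n\Bigl(1-\tfrac{30}{\gamma_{n+1}^2}+\tfrac{56}{\gamma_{n+1}^3}\Bigr),\]
so that $(M_n)$ is a bounded supermartingale and convergence is immediate. You instead Taylor-expand $f(x,y,z)=xyz$ and invoke Lemma~\ref{ODEcirc} to kill the drift, then handle the noise and remainder pieces separately. Your route is more conceptual---it shows transparently that convergence of $M_n$ is \emph{equivalent} to $f$ being a first integral of $P$, and would work verbatim for any smooth conserved quantity---whereas the paper's explicit formula is less portable but is reused heavily downstream (it drives the proofs of Lemma~\ref{middle-variance} and Proposition~\ref{not-middle}, which need the precise $30/\gamma^2$ coefficient).

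On full support, both arguments are a steer-then-pin: the paper simply asserts that $M_n$ lands in any target subinterval with positive probability for large $n$, while you spell out a steering recipe. One small gap in your pinning step: you claim $\pr\bigl(\sup_{k\ge n}|M_k-M_n|>\eps/2\,\big|\,\mathcal F_n\bigr)\to 0$ \emph{uniformly in the history}, citing ``a.s.\ summability of remainder tails''. Almost-sure summability alone does not give a uniform conditional bound; you need that in this model the remainders $\mathbf R_n$ are in fact deterministically $O(1/n)$ (they arise from the discrepancy between $\gamma_n=4n+2e_0$ and $n$), so the non-martingale tail is deterministically $O(1/n)$ and Doob handles the rest. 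Once this is said, your argument goes through.
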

\begin{proposition}\label{not-middle}
Almost surely $M=\lim_{n\to\infty}M_n<1/27$.	
\end{proposition}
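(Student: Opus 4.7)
My plan is to exploit Lemma~\ref{ODEcirc}, which says the deterministic vector field $P$ preserves $M=xyz$ along trajectories, so that the stochastic drift of $M_n$ is captured entirely by a quadratic noise correction. A second-order Taylor expansion of $M$ combined with \eqref{stochapprox} with $\Delta_n=\tfrac1n\bigl(P(\mathbf{x}_n)+\boldsymbol{\xi}_{n+1}+\mathbf{R}_n\bigr)$ gives
\[\ee[M_{n+1}-M_n\mid \salj_n] = \tfrac{1}{2n^2}\operatorname{tr}\!\bigl(H_M(\mathbf{x}_n)\Sigma_n\bigr) + \text{summable correction},\]
where $\Sigma_n=\ee[\boldsymbol{\xi}_{n+1}\boldsymbol{\xi}_{n+1}^{\top}\mid\salj_n]$: the identity $\nabla M\cdot P\equiv 0$ from Lemma~\ref{ODEcirc} kills the first-order drift, and the martingale term $\boldsymbol{\xi}$ has zero conditional mean. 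At $(1/3,1/3,1/3)$, the Hessian $H_M$ restricted to the tangent plane of $\Delta^2$ equals $-\tfrac{1}{3}I$, and a direct computation gives a uniformly nondegenerate $\Sigma_n$ at the centre, so there exist a neighbourhood $U$ of the centre and $\beta>0$ with drift at most $-\beta/n^2$ on $\{\mathbf{x}_n\in U\}$. Since $\nabla M$ is \emph{normal} to the simplex at the centre, the first-order noise contribution to $M$ vanishes there, yielding the sharper bound $\Var(M_{n+1}-M_n\mid\salj_n)=O\!\bigl((1/27-M_n)/n^2\bigr)$ in $U$.

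Armed with these estimates I would argue by contradiction. Suppose $\prb{M=1/27}>0$; by Proposition~\ref{main} this coincides with $\{\mathbf{x}_n\to(1/3,1/3,1/3)\}$, so for some $N$ the event $A=\{M=1/27\}\cap\{\mathbf{x}_k\in U\ \forall k\ge N\}$ has positive probability. Letting $T$ be the first exit from $U$ at or after time $N$, the stopped process
\[Z_k = M_{k\wedge T}+\beta\sum_{n=N}^{k\wedge T-1}n^{-2}, \qquad k\ge N,\]
is a bounded supermartingale whose almost-sure limit on $A$ equals $1/27+\beta\sum_{n\ge N}n^{-2}$, strictly exceeding the starting value $M_N\le 1/27$. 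Combining the supermartingale inequality $\ee[Z_\infty\mid\salj_N]\le M_N$ with a Chebyshev-type bound on the Doob-martingale part of $M_n$---whose cumulative variance is controlled by $\sum_k (1/27-M_k)/k^2$ and thus is small on $A$ because $M_k\to 1/27$ there---should give the required contradiction.

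The main obstacle is that in our specific model a direct calculation of the relevant constants yields $2\beta=\gamma$, placing the problem at the \emph{critical} Bessel dimension $d=2$. At this borderline the drift excess $\beta\sum_{n\ge N}n^{-2}\asymp \beta/N$ and the martingale standard deviation are of precisely comparable order, so the crude Chebyshev bound does not quite suffice and the classical $\log(1/27-M_n)$-Lyapunov argument has a vanishing leading coefficient. The cleanest way to finish is to use the stochastic-approximation invariance principle to identify the asymptotic radial dynamics of $\sqrt{1/27-M_n}$ (in the time parameter $\tau=\log n$) as a two-dimensional Bessel process, which almost surely never hits the origin; alternatively, one could push the Taylor expansion of a carefully chosen Lyapunov function to the next order and track the subleading corrections explicitly.
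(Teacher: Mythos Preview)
Your diagnosis is accurate: the first-order drift of $M_n$ vanishes by Lemma~\ref{ODEcirc}, the second-order drift is $-\beta/n^2$, and the conditional variance is $O((1/27-M_n)/n^2)$, placing the problem exactly at the critical Bessel dimension. But your proof then stops at precisely the hard step. You acknowledge that the supermartingale-plus-Chebyshev argument fails at criticality, and the two escape routes you suggest (an invariance principle towards a two-dimensional Bessel limit, or a higher-order Lyapunov function) are programmes rather than proofs; neither is routine, and neither is carried out. As written this is a genuine gap.

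The paper's proof avoids the criticality obstruction by changing the target. First, it uses the exact identity \eqref{mg} rather than a Taylor expansion, so the drift $-30M_n/\gamma_{n+1}^2$ and its sign are available globally with no remainder to track. Second, and more importantly, it does not attempt to show that $\{M=1/27\}$ fails with \emph{high} probability from a given state; at criticality one cannot expect that. Instead it proves two estimates, each with probability merely bounded away from $0$ uniformly in $n_0$, via Azuma--Hoeffding with the increment bound of Lemma~\ref{middle-variance} (the analogue of your variance bound): (i) starting near $1/27$, during $[n_0,2n_0]$ the process enters a band $M_n\in\bigl(\tfrac1{27}-\tfrac{c_1}{n},\tfrac1{27}-\tfrac{c_2}{n}\bigr)$; (ii) once in that band, it remains in a slightly wider band of the same form forever. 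On the latter event $M<1/27$. Since L\'evy's zero--one law supplies, for any $\eps>0$, an event $\mathcal A\in\salj_{n_0}$ with $\pr(M=1/27\mid\mathcal A)\geq 1-\eps$, the uniform lower bound on $\pr(M<1/27\mid\salj_{n_0})$ gives the contradiction once $\eps$ is chosen smaller than that bound. The key conceptual point your approach is missing is that at the critical scaling the drift and the noise are of the \emph{same} order over a doubling interval, which is not enough for concentration but is exactly enough for a uniform positive-probability estimate.
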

\begin{proposition}\label{not-edge}
Almost surely $M=\lim_{n\to\infty}M_n>0$.	
\end{proposition}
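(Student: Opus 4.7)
The plan is to argue by contradiction. Assume $\prb{M=0}>0$; combined with Proposition \ref{main}, this means that $(X_n,Y_n,Z_n)$ accumulates on the boundary $\partial\Delta^2$ with positive probability. By the standard theory of stochastic approximation (Bena\"{\i}m \cite{benaim}), the limit set of the process is almost surely a connected internally chain transitive (ICT) invariant set for the flow of $P$. The flow on $\partial\Delta^2$ is a heteroclinic three-cycle joining the three corners, so the ICT subsets of $\partial\Delta^2$ are the three corners themselves and the entire boundary triangle. Both possibilities then have to be excluded.

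Convergence to any individual corner is ruled out by a standard non-convergence theorem for stochastic approximation at unstable equilibria (Pemantle, see \cite{pemantlesurvey}). Linearising $P$ at, say, $(1,0,0)$ gives eigenvalues $\pm 1/2$, so the corner is a hyperbolic saddle, with unstable direction along the outgoing edge $z=0$ into the interior. The only hypothesis to check is that the noise $\boldsymbol{\xi}_{n+1}$ has conditional variance bounded below in the unstable $y$-direction uniformly in a neighbourhood of the corner. This is immediate from the model: whenever $Y_n>0$, the conditional probability that the vertex added at step $n+1$ is of type $2$ is bounded below by a positive multiple of $Y_n$ (for instance, it suffices that the two sampled neighbours be one of type $1$ and one of type $2$), producing a jump of order $1/n$ in the $y$-coordinate.

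The main obstacle is then ruling out the whole boundary triangle as a possible limit set. This case sits on the borderline of standard theory, because the heteroclinic cycle is neutral: at each corner the stable and unstable eigenvalues of $P$ have equal magnitude, so the cycle neither attracts nor repels trajectories to first order. To handle it I would show that each time the process visits a small neighbourhood of a corner, the conditional probability given the past that $(X_n,Y_n,Z_n)$ is pushed onto a level set $\{xyz\ge c_0\}$, for some fixed $c_0>0$, before it exits the neighbourhood along the unstable edge is bounded below by some $p>0$ uniformly over the visit. On the event that the limit set is the whole boundary triangle the trajectory revisits each corner neighbourhood infinitely often, so a conditional Borel--Cantelli argument would then give almost sure escape and contradict $M=0$. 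The delicate point is the uniform lower bound on the escape probability, which I would establish by a linearised analysis after the time change $\tau=\log n$, showing that the coordinate normal to the heteroclinic cycle evolves as a near-exponential martingale whose diffusion is of the same order as the restoring drift near a corner, and so attains macroscopic values with uniformly positive probability during each passage.
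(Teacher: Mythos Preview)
Your approach is different from the paper's and has two genuine gaps.

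For the corners, you correctly state that Pemantle's theorem requires the noise in the unstable direction to be bounded below \emph{uniformly} near the equilibrium, but your own verification contradicts this: a jump of order $1$ in $\boldsymbol\xi_{n+1}$ occurring with probability of order $Y_n$ gives a conditional variance of order $Y_n$ in the $y$-direction, which vanishes at $(1,0,0)$. So the theorem does not apply as stated. (This is repairable --- once $X_n>1/2$ the probability that $v_{n+1}$ is of type~$2$ is at least $Y_n$, so $Y_n$ dominates a P\'olya urn proportion, as the paper in fact does for the affine variant --- but the argument you wrote does not work.) For the heteroclinic cycle you give only a plan, and you acknowledge that the neutrality of the cycle puts it outside standard non-attainability results. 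The crucial uniform lower bound on the escape probability per corner visit is never established; the ``near-exponential martingale'' sketch is not an argument, and for a neutral cycle there is no a priori reason the escape probabilities should not decay along successive visits. This step is at least as hard as the proposition itself.

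The paper sidesteps all of this with a short, model-specific argument working directly with the supermartingale $M_n$ rather than the flow. In standard preferential attachment the degree of each initial vertex is almost surely $(1+o(1))\xi\sqrt{\gamma_n}$ for some a.s.\ positive $\xi$, so each type has normalised degree $\Omega(\gamma_n^{-1/2})$ and hence $M_n=\Omega(\gamma_n^{-1})$ pathwise. A direct computation gives $\Var(M_{n+1}\mid\salj_n)=O(M_n\gamma_n^{-2})$; on the event $M_{n'}\le 2M_n$ for all $n'\ge n$, summing yields $\Var(M\mid\salj_n)=O(M_n\gamma_n^{-1})=O(M_n^2)$. Combined with the supermartingale drift from \eqref{mg}, this forces $\pr\bigl(M\in(M_n/2,3M_n/2)\bigm|\salj_n\bigr)$ to be bounded away from $0$, contradicting $\pr(M=0\mid\mathcal A)>1-\eps$ for events $\mathcal A\in\salj_n$. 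The key input --- the $\sqrt{\gamma_n}$ degree growth of the starting vertices --- is precisely what your dynamical-systems route does not exploit, and it is what makes the boundary analysis unnecessary.
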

\begin{proof}[Proof of Proposition \ref{main}]
Let $\gamma_n=4n+2e_0$, that is the total degree in $G_n$; here $e_0$ is the number of edges in the initial graph $G_0$.	Then, if the two sampled vertices are both ``rock'', which has probability $X_n^2$, we have that
\[M_{n+1}\gamma_{n+1}^3=(X_n\gamma_n+4)Y_nZ_n\gamma_n^2=M_n\gamma_n^3+4Y_nZ_n\gamma_n^2,\]
while if one sampled vertex is ``rock'' and the other is ``paper'', which has probability $2X_nY_n$, we have that
\[M_{n+1}\gamma_{n+1}^3=(X_n\gamma_n+1)(Y_n\gamma_n+3)Z_n\gamma_n=M_n\gamma_n^3+3X_nZ_n\gamma_n^2+Y_nZ_n\gamma_n^2+3Z_n\gamma_n,\]
with analogous expressions for the other possibilities.  Hence \begin{align}
\ee(M_{n+1}\mid\salj_n) &= \gamma_{n+1}^{-3}(M_n\gamma_n^3+(4+6+2)M_n\gamma_n^2+(6+6+6)M_n\gamma_n)\nonumber\\
&= M_n(\gamma_n+4)^{-3}(\gamma_n^3+12\gamma_n^2+18\gamma_n)\nonumber\\
&= M_n\left(1-\frac{30}{\gamma_{n+1}^2}+\frac{56}{\gamma_{n+1}^3}\right),\label{mg}\end{align}
showing that $(M_n)_{n\in\nn}$ is a supermartingale.  It takes values in $[0,1/27]$.

If we let $R_{n+1}=M_n\left(\frac{30}{\gamma_{n+1}^2}-\frac{56}{\gamma_{n+1}^3}\right)$ and $\tilde{M}_n=M_n+\sum_{k=1}^n R_k$ then $(\tilde{M})_{n\in\nn}$ is a martingale.  The difference $\tilde{M}_n-M_n$ is bounded, so $\tilde{M}_n\to \tilde{M}$ almost surely, where $\tilde{M}$ is a random limit.

There exist positive constants $c_1,c_2$ such that $-\frac{c_1}{\gamma_n}\leq M_{n+1}-M_n\leq \frac{c_2}{\gamma_n}$.  Hence there exists $c$ such that $\Var(\tilde{M}_{n+1}-\tilde{M}_n\mid\salj_n)\leq \frac{c}{\gamma_n^2}$ and thus $\Var(\tilde{M}\mid\salj_n)\to 0$ as $n\to\infty$.

Given an interval $(r,r+\epsilon)\in(0,1/27)$, for $n$ large enough there will be positive probability of $M_n\in (r+\epsilon/3,r+2\epsilon/3)$.  That $\Var(\tilde{M}\mid\salj_n)\to 0$ and that $\sum_{k=n+1}^{\infty}R_k \to 0$ as $n\to\infty$ ensures that if $n$ is large enough there is then positive probability of $M\in (r,r+\epsilon)$.
\end{proof}
In order to prove Proposition \ref{not-middle}, we will need better control on the variation of $M_{n+1}$ if $M_n$ is close to $1/27$.
\begin{lemma}\label{middle-variance}If $M_n>\frac{1}{27}-\frac{c}{\gamma_n}$ then $\babs{M_{n+1}-\ee(M_{n+1}\mid\salj_n)}<\frac{C}{\gamma_n^{3/2}}$ for some $C$ which depends only on $c$ and for sufficiently large $n$.\end{lemma}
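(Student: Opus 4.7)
The strategy is to exploit that $1/27$ is the global maximum of $xyz$ on the simplex $\Delta^2$, attained uniquely at the barycenter $(1/3,1/3,1/3)$, where the gradient $(yz,xz,xy)=(1/9,1/9,1/9)$ is normal to the simplex. When $M_n$ sits within $c/\gamma_n$ of the maximum, I will show that $(X_n,Y_n,Z_n)$ is forced to be within $O(\gamma_n^{-1/2})$ of the barycenter, and then a Taylor expansion will show that the first-order contribution to $M_{n+1}-M_n$ is much smaller than the naive bound $O(\gamma_n^{-1})$, because that contribution projects against the constraint $\Delta X + \Delta Y + \Delta Z = 0$.

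For the first step, set $u = X_n - 1/3$, $v = Y_n - 1/3$, $w = Z_n - 1/3$, so that $u+v+w=0$. A direct expansion in these variables gives
\[\tfrac{1}{27} - X_n Y_n Z_n = \tfrac{1}{6}(u^2+v^2+w^2) - uvw.\]
Since $\abs{uvw} = O\bbrac{(u^2+v^2+w^2)^{3/2}}$ on the simplex, compactness together with the uniqueness of the barycenter as the maximum of $xyz$ ensures that for all sufficiently large $n$ the hypothesis $M_n > 1/27 - c/\gamma_n$ forces the cubic term to be dominated by half the quadratic, and hence $u^2+v^2+w^2 \le K/\gamma_n$ for a constant $K=K(c)$; in particular $\abs{u},\abs{v},\abs{w}= O(\gamma_n^{-1/2})$.

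For the second step, write $\Delta X = X_{n+1}-X_n$ and analogously for $\Delta Y,\Delta Z$; each is $O(\gamma_n^{-1})$ and $\Delta X + \Delta Y + \Delta Z = 0$. Taylor-expanding $xyz$ about $(X_n,Y_n,Z_n)$ yields
\[M_{n+1} - M_n = Y_n Z_n\,\Delta X + X_n Z_n\,\Delta Y + X_n Y_n\,\Delta Z + O(\gamma_n^{-2}).\]
By the first step, each of $Y_nZ_n$, $X_nZ_n$, $X_nY_n$ equals $\tfrac19 + O(\gamma_n^{-1/2})$; writing each coefficient as $\tfrac19$ plus an $O(\gamma_n^{-1/2})$ remainder and using $\Delta X+\Delta Y+\Delta Z=0$ to kill the common $\tfrac19$ contribution leaves $\abs{M_{n+1}-M_n} = O(\gamma_n^{-1/2})\cdot O(\gamma_n^{-1}) + O(\gamma_n^{-2}) = O(\gamma_n^{-3/2})$. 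Consequently $\babs{\ee(M_{n+1}\mid\salj_n)-M_n}$ is also $O(\gamma_n^{-3/2})$, and so is the centered fluctuation $\babs{M_{n+1}-\ee(M_{n+1}\mid\salj_n)}$, with implied constant depending only on $c$.

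The main obstacle is the quantitative bound $u^2+v^2+w^2 \le K/\gamma_n$: one must ensure the cubic $uvw$ is dominated by the quadratic $\tfrac16(u^2+v^2+w^2)$, which is exactly where the ``sufficiently large $n$'' assumption is used. Once that is established, the improvement from $O(\gamma_n^{-1})$ to $O(\gamma_n^{-3/2})$ is produced purely by the cancellation $\Delta X+\Delta Y+\Delta Z=0$ against a gradient that is nearly in the normal direction to the simplex.
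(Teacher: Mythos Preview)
Your argument is correct. The first step---forcing $(X_n,Y_n,Z_n)$ to lie within $O(\gamma_n^{-1/2})$ of the barycenter---is exactly what the paper does (the paper phrases it as $\abs{X_n-\tfrac13}<c'/\sqrt{\gamma_n}$, obtained from the same quadratic expansion you wrote down). Where you diverge is in the second step. The paper proceeds case by case through the six possible outcomes of the draw, computes $M_{n+1}\gamma_{n+1}^3-\mu_{n+1}$ explicitly in each case, and bounds it using $\tfrac1{X_n}=3+O(\gamma_n^{-1/2})$, etc. You instead expand $M_{n+1}-M_n$ as a polynomial in the increments $\Delta X,\Delta Y,\Delta Z$, note that the linear part $Y_nZ_n\,\Delta X+X_nZ_n\,\Delta Y+X_nY_n\,\Delta Z$ has coefficients equal to $\tfrac19+O(\gamma_n^{-1/2})$, and use the simplex constraint $\Delta X+\Delta Y+\Delta Z=0$ to annihilate the common $\tfrac19$. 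This is cleaner and more conceptual: it makes explicit that the improvement from $O(\gamma_n^{-1})$ to $O(\gamma_n^{-3/2})$ comes from the gradient of $xyz$ at the barycenter being normal to $\Delta^2$, so the argument would transfer verbatim to any smooth function maximised at an interior point of the simplex. The paper's approach, by contrast, gives explicit constants and stays closer to the exact recursion already computed in \eqref{mg}, which is convenient for the subsequent Azuma--Hoeffding application but hides the geometric reason.
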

\begin{proof}
Note that if $\abs{X_n-\frac13}\geq \frac{c'}{\sqrt{\gamma_n}}$ then $M_n\leq\frac{1}{27}-\frac{c'^2}{4\gamma_n}+\frac{c'^3}{4\gamma_n^{3/2}}$. Consequently, for a suitable
choice of $c'$ and sufficiently large $n$ we have $\abs{X_n-\frac13},\abs{Y_n-\frac13},\abs{Z_n-\frac13}<\frac{c'}{\sqrt{\gamma_n}}$. In turn this means that
\begin{align*}\frac{1}{X_n}&<\frac{3}{1-\frac{3c'}{\sqrt{\gamma_n}}}\\
&=3+\frac{9c'}{\sqrt{\gamma_n}}+\frac{27c'^2}{\gamma_n-3c'\sqrt{\gamma_n}}\\
&<3+\frac{c''}{\sqrt{\gamma_n}}\end{align*}
for some $c''$ and $n$ sufficiently large. Similarly we have $\frac{1}{X_n}>3-\frac{c''}{\sqrt{\gamma_n}}$, and the same bounds apply to $Y_n,Z_n$.

Write $\mu_{n+1}$ for $\ee(M_{n+1}\gamma_{n+1}^3\mid\salj_n)$; by \eqref{mg}, since $\gamma_{n+1}=\gamma_n+4$, we have
\begin{align*}\mu_{n+1}&=M_n\gamma_{n+1}^3-30M_n\gamma_{n+1}+56M_n\\
&=M_n\gamma_n^3+12M_n\gamma_n^2+18M_n\gamma_n.\end{align*}

With probability $X_n^2$ we have $M_{n+1}\gamma_{n+1}^3=M_n\gamma_n^3+4Y_nZ_n\gamma_n^2$, giving
\begin{align*}\abs{M_{n+1}\gamma_{n+1}^3-\mu_{n+1}}&\leq\Bigl|\frac{4}{X_n}-12\Bigr|M_n\gamma_n^2+18M_n\gamma_n\\
&<\frac{4c''}{27}\gamma_n^{3/2}+\frac{2}{3}\gamma_n<C\gamma_n^{3/2}
\end{align*}
for some $C$ and sufficiently large $n$. With probability $2X_nY_n$ we have $M_{n+1}\gamma_{n+1}^3=M_n\gamma_n^3+3X_nZ_n\gamma_n^2+Y_nZ_n\gamma_n^2+3Z_n\gamma_n$, giving
\begin{align*}\abs{M_{n+1}\gamma_{n+1}^3-\mu_{n+1}}&\leq\Bigl|\frac{3}{Y_n}-9+\frac{1}{X_n}-3\Bigr|M_n\gamma_n^2+\abs{3Z_n-18M_n}\gamma_n\\
&<\frac{4c''}{27}\gamma_n^{3/2}+3\gamma_n+\frac{2}{3}\gamma_n<C\gamma_n^{3/2},
\end{align*}
and similar bounds apply in other cases. Thus we have
\[\babs{M_{n+1}-\ee(M_{n+1}\mid\salj_n)}<\frac{C\gamma_n^{3/2}}{\gamma_{n+1}^3}<\frac{C}{\gamma_n^{3/2}}.\qedhere\]
\end{proof}
We are now ready to show that almost surely $M_n$ does not tend to $1/27$.
\begin{proof}[Proof of Proposition \ref{not-middle}]
Suppose for the sake of contradiction that $\prb{M=\frac1{27}}>0$. Then for $n_0$ sufficiently large there will be an event $\mathcal A\in\salj_{n_0}$ such that
$\prb{M=\frac1{27}\mid\mathcal A}\geq 1-\eps$.
Fix $c_1>c_2>0$ to be chosen later, and let $\mathcal B$ be the event that for some $n\in[n_0,2n_0]$ we have $M_{n}\in\bbrac{\frac1{27}-\frac{c_1}{n},\frac1{27}-\frac{c_2}{n}}$.
We claim that, for suitable $c_1,c_2$ which do not depend on $n_0$, we have $\pr(\mathcal B\mid\salj_{n_0})$ is bounded away from $0$ for $n_0$ sufficiently large.
To see this, stop the process if $M_n<\frac1{27}-\frac{c_2}{n_0}$ or if $n=2n_0$; write $\tau$ for the stopping time. By choice of $\tau$, it follows from \eqref{mg} that
$\hier[M]{\tau}{n+1}+\sum_{k=n_0}^{\min(n,\tau)}\frac 1{\gamma_k^2}$ is a supermartingale, since $30\hier[M]{\tau}n\geq30\bbrac{\frac1{27}-\frac{c_1}{n_0}}>1$.

If $\mathcal B$ fails, we must have $\tau=2n_0$ and $M_{2n_0}>\frac1{27}-\frac{c_2}{2n_0}$, \ie
\[\hier[M]{\tau}{2n_0}+\sum_{k=n_0}^{2n_0-1}\frac 1{\gamma_k^2}>\frac1{27}+\frac a{n_0}\]
for some constant $a$, which is positive for suitable choice of $c_2$. Applying Azuma--Hoeffding, using Lemma \ref{middle-variance},
this occurs with probability bounded away from $1$.

Suppose $\mathcal B$ occurs, with $\tau=n_1$. Let $\mathcal C$ be the event that $M_n\in\bbrac{\frac1{27}-\frac{2c_1}{n_1},\frac1{27}-\frac{c_2}{2n_1}}$ for every $n\geq n_1$.
We claim that $\pr(\mathcal C\mid\salj_{n_1})$ is bounded away from $0$. The proof is similar: fix $n_2>n_1$ and stop the process if it leaves the interval or if $n=n_2$,
with stopping time $\tau'$.
If $\mathcal C$ fails to hold before $n_2$ then we have, evaluated at $n=n_2$, either
\begin{equation}\hier[M]{\tau'}{n}>M_{n_1}-\frac{c_2}{2n_1}\label{toohigh}\end{equation} or
\begin{equation}\hier[M]{\tau'}{n}-\sum_{k=n_1}^{n-1}\frac{2}{\gamma_k^2}<M_{n_1}-\frac{c_1}{n_1}\label{toolow}.\end{equation}
Since the left-hand sides of \eqref{toohigh} and \eqref{toolow} are respectively a supermartingale and submartingale, with variations bounded by Lemma \ref{middle-variance},
again by Azuma--Hoeffding this has probability bounded away from $1$, where the bound is independent of $n_1$ and $n_2$.
\end{proof}
Finally, we show that almost surely the limit $M$ is positive.
\begin{proof}[Proof of Proposition \ref{not-edge}]
First we claim that almost surely $M_n=\Omega(\gamma_n^{-1})$. Indeed, in a standard preferential attachment process the degree of a fixed vertex $v_i$ satisfies $d_n(v_i)=(1+o(1))\xi_i\sqrt{\gamma_n}$, where $\xi_i$ is a random variable which is almost surely positive: see Theorem 8.2, Lemma 8.17 and Exercise 8.13 of \cite{vdH}.
Thus the contribution of the starting vertices alone ensures that $\min(X_n,Y_n,Z_n)=\Omega(\gamma_n^{-1/2})$ and so $X_nY_nZ_n=\Omega(\gamma_n^{-1})$.

As in the proof of Lemma \ref{middle-variance}, with probability $X_n^2$ we have
\[M_{n+1}\gamma_{n+1}^3-\mu_{n+1}=(4Y_nZ_n-12M_n)\gamma_n^2-18M_n\gamma_n;\]
note that
\[X_n^2\bigl((4Y_nZ_n-12M_n)\gamma_n^2-18M_n\gamma_n\bigr)^2=O(M_n^2\gamma_n^4).\]
With probability $2X_nY_n$ we have
\[M_{n+1}\gamma_{n+1}^3-\mu_{n+1}=(3X_nZ_n+Y_nZ_n-12M_n)\gamma_n^2+(3Z_n-18M_n)\gamma_n,\]
and
\[2X_nY_n\bigl((3X_nZ_n+Y_nZ_n-12M_n)\gamma_n^2+(3Z_n-18M_n)\gamma_n\bigr)^2=O(M_n\gamma_n^4).\]
Similar expressions hold for the other possibilities, giving $\Var(M_{n+1}\gamma^3_{n+1}\mid\salj_n)=O(M_n\gamma_n^4)$,
\ie $\Var(M_{n+1}\mid\salj_n)=O(M_n\gamma_n^{-2})$.

Suppose $M_{n'}<2M_n$ for all $n'\geq n$. Then we have $\Var(M_{n'+1}\mid\salj_{n'})=O(M_n\gamma_n^{-2})$
for each $n'\geq n$, giving $\Var(M\mid\salj_n)=O(M_n\gamma_n^{-1})=O(M_n^2)$. Thus
there is a probability bounded away from $0$ as $n\to\infty$ that $M$ is in the
interval $(M_n/2, 3M_n/2)$ conditional on $\salj_n$, but if $M=0$ has positive
probability then for any $\eps>0$ and $n$ sufficiently large there is an event
$\mathcal A\in\salj_n$ with $P(M=0\mid\mathcal A)>1-\eps$, giving a contradiction.\end{proof}

We can now complete the proof of our main result.
\begin{proof}[Proof of Theorem \ref{summary}]
Propositions \ref{main}, \ref{not-middle} and \ref{not-edge} show that the limit set $L(X,Y,Z)$ is, almost surely, contained within $\{(x,y,z)\in\Delta^2: xyz=M\}$, where $M\in(0,1/27)$ is the random variable defined in Proposition \ref{main}.  By Theorem 5.7 of Bena\"{\i}m \cite{benaim}, $L(X,Y,Z)$ is almost surely a chain transitive set for the flow; here a chain transitive set for the flow is an invariant set $M$ for the flow such that for every pair of points $a$ and $b$ in $M$ and for any $\delta>0$ and $T>0$ there is a $(\delta,T)$-pseudo-orbit from $a$ to $b$, meaning a finite sequence of partial trajectories, with the first starting at $a$ and the last starting at $b$, the duration of each trajectory at least $T$, and the finishing point of one trajectory and the starting point of the next at most $\delta$ apart.
 For $M\in(0,1/27)$ the only invariant set for the flow, and hence the only chain transitive set, which is a subset of $\{(x,y,z)\in\Delta^2: xyz=M\}$ is the whole set.
\end{proof}

\begin{figure}
\centering\includegraphics[width=.9\textwidth]{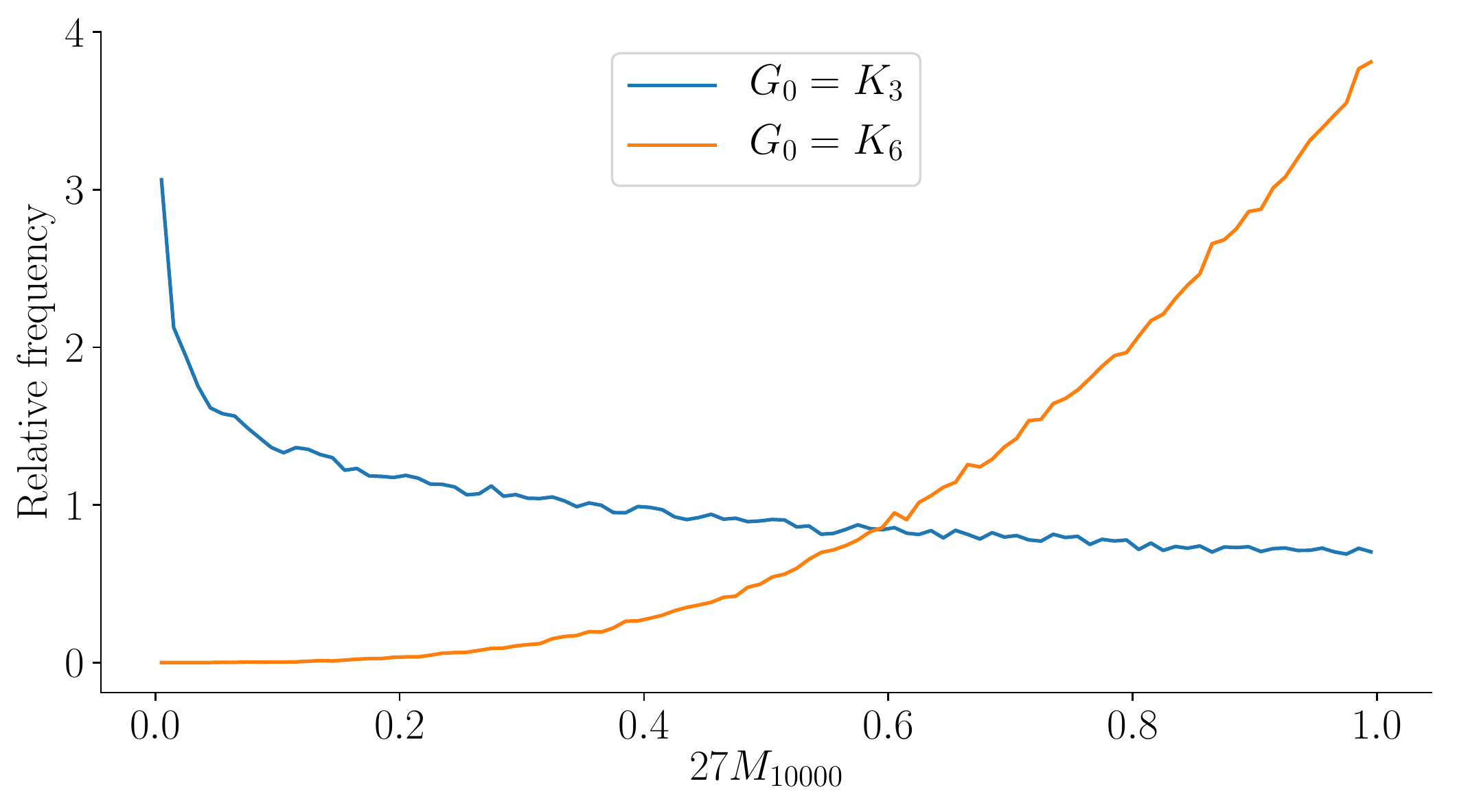}
\caption{Distributions of the value of $27M_{10000}$ from two different starting graphs.}\label{fig:dist}
\end{figure}

The distribution of $M$ will naturally depend critically on the starting graph $G_0$. Figure \ref{fig:dist} shows approximate distributions for two particular choices of $G_0$, being the complete graphs on $3$ and $6$ vertices respectively, each with equal numbers of rock, paper and scissors vertices. These distributions were taken from simulations to time $10000$.

\begin{figure}
\centering\includegraphics[width=.9\textwidth]{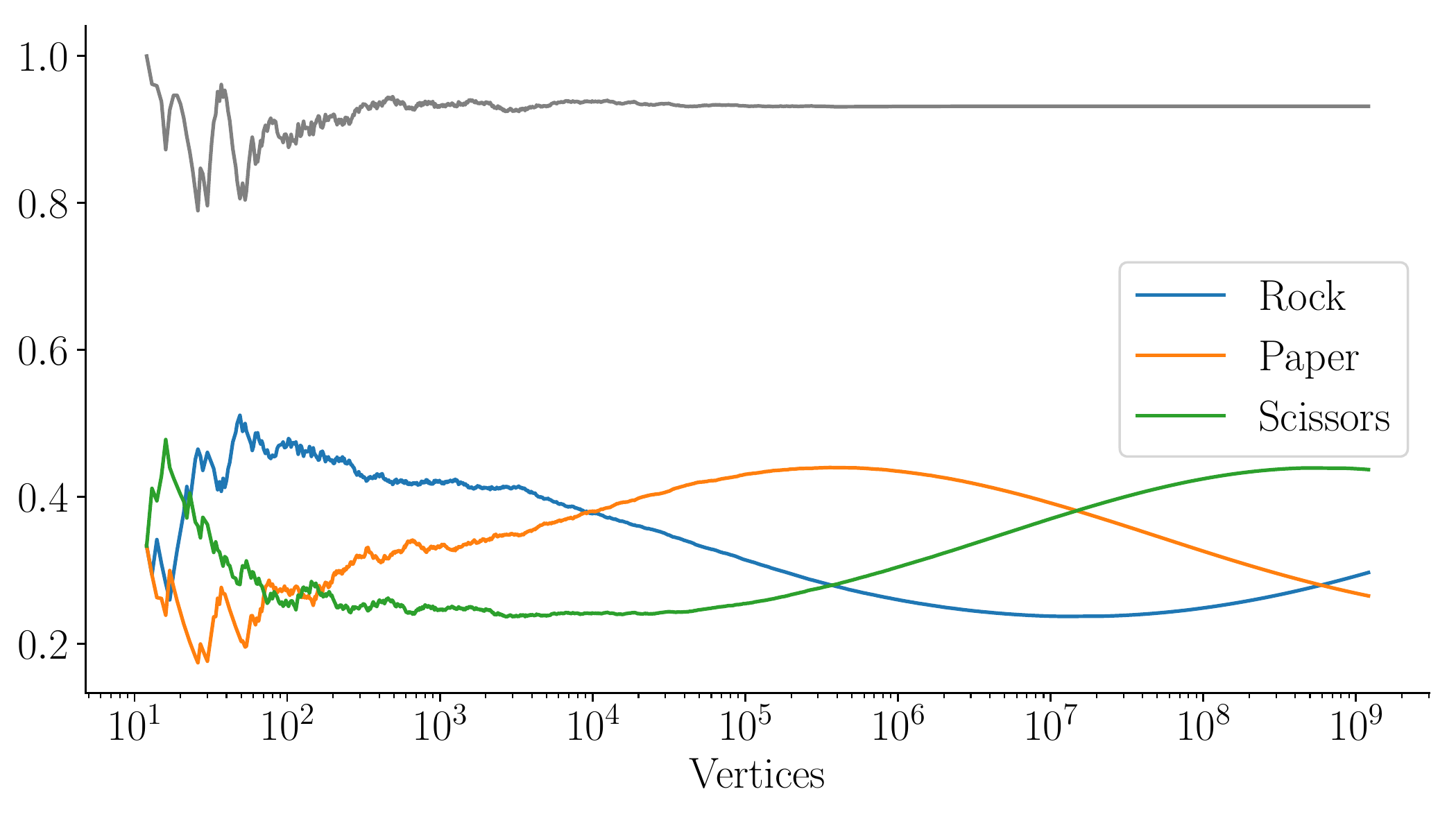}
\caption{Evolution of $X_n,Y_n,Z_n$ from a simulation, together with $27M_n$ (grey curve).}\label{circling}
\end{figure}

\subsection{Rate of circling}

In this section we show that circling around the limiting cycle occurs on a logarithmic scale as $n\to\infty$, at a rate which depends only on the limit parameter $M$; this is consistent with the behaviour seen in Figure \ref{circling}.

\begin{theorem}For $n_0$ sufficiently large depending on $M=\lim_{n\to\infty}M_n$, with high probability the process completes a circuit
approximating the trajectory $M_n=M$ at time $(A+o(1))n_0$, where $A>1$ is a parameter which depends only on $M$.\end{theorem}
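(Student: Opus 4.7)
The plan is to exploit the well-known equivalence between stochastic approximation processes with step size $1/n$ and their limiting ODE under the logarithmic change of time $t=\log n$. Under this reparametrisation the recursion \eqref{stochapprox} becomes an Euler-type discretisation of $\dot{\mathbf{x}}=P(\mathbf{x})$ with step sizes $\log(n+1)-\log n\sim 1/n$. Since the noise $\boldsymbol{\xi}_n$ is bounded and $\sum\mathbf{R}_n/n<\infty$ almost surely, standard tracking results (as in Section 4 of \cite{benaim}) imply that, for any fixed $T>0$, the piecewise interpolation $\bar{\mathbf{x}}(\log n)=\mathbf{x}_n$ converges uniformly on $[\log n_0,\log n_0+T]$ to the deterministic flow of $P$ started at $\mathbf{x}_{n_0}$, as $n_0\to\infty$.

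The geometric input is that for each $M\in(0,1/27)$ the level curve $\Gamma_M=\{(x,y,z)\in\Delta^2:xyz=M\}$ is, by Lemma \ref{ODEcirc} and the surrounding discussion, a smooth simple closed orbit of $P$ encircling $(1/3,1/3,1/3)$ and containing no stationary points. Its period
\[T(M)=\oint_{\Gamma_M}\frac{d\ell}{\abs{P(x,y,z)}}\]
is finite, strictly positive, and continuous in $M\in(0,1/27)$. By Theorem \ref{summary}, on the event $\{\lim M_n=M\}$ the distance from $\mathbf{x}_{n_0}$ to $\Gamma_M$ tends to $0$ almost surely as $n_0\to\infty$.

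To conclude, I would fix a Poincar\'e section $\Sigma$ transverse to $\Gamma_M$ at a chosen base point, formalising ``completing a circuit'' as the first return to $\Sigma$. The deterministic flow starting within $o(1)$ of $\Gamma_M$ returns to $\Sigma$ after log-time $T(M)+o(1)$; applying the tracking theorem on $[\log n_0,\log n_0+T(M)+1]$ then places the discrete first-return also after log-time $T(M)+o(1)$, which in the original index corresponds to $n_0\,e^{T(M)+o(1)}=(A+o(1))n_0$ with $A=e^{T(M)}$. Since $T(M)>0$ we have $A>1$, and $A$ depends only on $M$.

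The main obstacle will be making all of this uniform despite $M$ being itself a random quantity measurable with respect to $\salj_\infty$. The argument is therefore carried out conditionally on $\salj_{n_0}$: for any $\eps>0$ one picks $n_0$ large enough that, with conditional probability at least $1-\eps$, $M_{n_0}$ is within $\eps$ of $M$, $\mathbf{x}_{n_0}$ lies in an $\eps$-neighbourhood of $\Gamma_M$, and the tracking error on $[\log n_0,\log n_0+T(M)+1]$ is at most $\eps$. Continuity of $M\mapsto T(M)$ and $M\mapsto\Gamma_M$ then converts these inputs into the claimed approximation; the only ingredient required from \cite{benaim} is a version of the tracking theorem uniform over starting points in a compact subset of the interior of $\Delta^2$, where $P$ is smooth.
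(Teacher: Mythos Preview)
Your proposal is correct and arrives at the same answer $A=e^{T(M)}$, but the route is genuinely different from the paper's. You invoke the asymptotic pseudotrajectory machinery of \cite{benaim}: under the time change $t=\log n$ the interpolated process uniformly tracks the ODE flow on compact time intervals, and since $\Gamma_M$ is a periodic orbit of period $T(M)$ with no equilibria, a Poincar\'e-section argument converts uniform tracking into a first-return time of $T(M)+o(1)$ in log-time. The paper instead works by hand: it partitions real time geometrically via $n_{i+1}=\lfloor(1+\eps)n_i\rfloor$ and, on each block $[n_i,n_{i+1}]$, uses an explicit urn coupling (classifying new vertices as ``typical'' or not according to whether both sampled edge-ends were present at time $n_i$) together with a second-moment bound to show that the displacement is $\frac{\eps}{2(1+\eps)}f(X_{n_i},Y_{n_i},Z_{n_i})+O(\eps^2)$ with high probability, where $f=\lvert P\rvert$; summing the resulting Riemann-type approximation around the loop and letting $\eps\to0$ yields $A$. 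Your approach is cleaner and more conceptual, and makes transparent why the answer is the ODE period; the paper's approach is more self-contained, avoids citing the general tracking theorem, and makes the error terms explicit at each scale. Both handle the randomness of $M$ in the same way, by first conditioning on $\lvert M_n-M\rvert$ being small for all $n\ge n_0$.
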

\begin{proof}For $(x,y,z)\in\Delta^2$, write $f(x,y,z)$ for $\bigl\Vert\frac{x(z-y)}2,\frac{y(x-z)}2,\frac{z(y-x)}2\bigr\Vert$.
For any $\delta>0$, $f(x,y,z)$ is bounded away from $0$ whenever $\min(x,y,z)\in(\delta,1/3-2\delta)$, since assuming without loss of
generality that $x$ is the median value we have $\abs{x(z-y)}>\delta/3$, and trivially $f$ is also bounded above. Similarly we may bound
all partial derivatives of $f(x,y,z)$ away from $0$ when $\min(x,y,z)\in(\delta,1/3-2\delta)$.

Let $\mathcal C_M$ be the curve $\{(x,y,z)\in\Delta^2:xyz=M\}$, and let $L_M$ be its length (in the Euclidean metric).
Fix $\delta>0$ such that $M\in(\delta,1/27-\delta)$. Let $\eps\in(0,\delta^2)$ be arbitrary, and suppose $n_0$ is sufficiently large
that $\abs{M_n-M}<\eps^2$ for all $n>n_0$ with high probability. Note that, conditioned on this event, we must have
$\min(X_n,Y_n,Z_n)\in(\delta,1/3-2\delta)$ for all $n>n_0$ and so $f(X_n,Y_n,Z_n)$ is bounded away from $0$.

Write $n_{i+1}=\floor{(1+\eps)n_i}$ for $i\geq 0$ and consider the process $(X_n,Y_n,Z_n)$ for $n\in[n_0,n_1]$. Think of this as an urn process,
where balls represent edge-ends; for each vertex we draw two balls from the urn, replace them and add four new balls depending on the draw.
For the moment, only reveal the information of whether each ball drawn was in the urn at time $n_0$ or not; call a vertex ``typical'' if
both balls drawn for that vertex were in at time $n_0$. The number of atypical vertices is dominated by a binomial $(\floor{\eps n_0}, 2\eps)$
random variable, so we have at least $(\eps-3\eps^2)n_0$ typical vertices with high probability. Now the type of new balls added for each
typical vertex are independent and identically distributed, contributing on average $4\eps n_0X_{n_0}\bbrac{1+\frac{Z_{n_0}-Y_{n_0}}2}$ rock,
$4\eps n_0Y_{n_0}\bbrac{1+\frac{X_{n_0}-Z_{n_0}}2}$ paper and $4\eps n_0Z_{n_0}\bbrac{1+\frac{Y_{n_0}-X_{n_0}}2}$ scissors to the urn,
so the variance of numbers of each type contributed by typical vertices is $O(\eps n)=o(\eps^2n^2)$. Consequently with high probability
at time $n_1$ the number of balls of type rock is at least $4n_0X_{n_0}+4\eps nX_{n_0}\bbrac{1+\frac{Z_{n_0}-Y_{n_0}}2}-4\eps^2n$ and at most
$4n_0X_{n_0}+4\eps nX_{n_0}\bbrac{1+\frac{Z_{n_0}-Y_{n_0}}2}+4\eps^2n_0$, and similarly for paper and scissors.

It follows that with high probability the distance from $(X_{n_0},Y_{n_0},Z_{n_0})$ to $(X_{n_1},Y_{n_1},Z_{n_1})$ is within $12\eps^2$ of
$\frac{\eps}{2(1+\eps)}f(X_{n_0},Y_{n_0},Z_{n_0})$, and similarly with high probability the distance from $(X_{n_i},Y_{n_i},Z_{n_i})$ to $(X_{n_{i+1}},Y_{n_{i+1}},Z_{n_{i+1}})$
is within $12\eps^2$ of $\frac{\eps}{2(1+\eps)}f(X_{n_i},Y_{n_i},Z_{n_i})$ for each $i$.
Note that, since partial derivatives of $f$ are
bounded away from $0$, $f(X_{n_i},Y_{n_i},Z_{n_i})^{-1}$ is within $O(\eps^{2})$ of $f(x_i,y_i,z_i)^{-1}$, where we define $(x_i,y_i,z_i)$ to be the closest
point to $(X_{n_i},Y_{n_i},Z_{n_i})$ on $\mathcal C_M$.

There exist values $b<B$ such that after at least $b/\eps$ and at most $B/\eps$ steps of this form the process has completed a circuit.
The time at which this occurs is therefore in the interval $[(1+\eps)^{b/\eps}n_0,(1+\eps)^{B/\eps}n_0]$, \ie in $[\mathrm{e}^bn_0,\mathrm{e}^Bn_0]$.

Letting $\eps\to 0$ gives the required result with $A=\exp\bbrac{2L_M\int_{t\in\mathcal C_M}f(t)^{-1}\,\mathrm{d}t}$.\end{proof}

\subsection{Affine preferential attachment}

A natural extension of the model of \cite{AMR}, considered briefly in that paper, is where we have affine preferential attachment so that a vertex $v$ is chosen for attachment with probability proportional to $d(v)+\alpha$ for some $\alpha>-2$.  Affine preferential attachment was introduced by Dorogovtsev, Mendes and Samukhin in \cite{dorog}.  It turns out that  the behaviour of the rock-paper-scissors model is similar in this modified setting.  Let $X_n,Y_n,Z_n$ be the probabilities of selecting rock, paper and scissors vertices respectively by a single preferential choice at time $n$, let $M_n=X_nY_nZ_n$, and write $\gamma_n=\sum_v(d_n(v)+\alpha)$. Now we have $\gamma_{n+1}^3M_{n+1}=(\gamma_nX_n+4+\alpha)\gamma_nY_n\gamma_nZ_n$ with probability $X_n^2$, $\gamma_{n+1}^3M_{n+1}=(\gamma_nX_n+1)(\gamma_nY_n+3+\alpha)\gamma_nZ_n$ with probability $2X_nY_n$, and so on, giving
\begin{align*}
\ee(M_{n+1}\mid\salj_n) &= \gamma_{n+1}^{-3}(M_n\gamma_n^3+(12+3\alpha)M_n\gamma_n^2+(18+6\alpha)M_n\gamma_n)\\
&= M_n(\gamma_n+4+\alpha)^{-3}(\gamma_n^3+(12+3\alpha)\gamma_n^2+(18+6\alpha)\gamma_n)\\
&= M_n\left(1-\frac{(30+18\alpha+3\alpha^2)\gamma_n}{\gamma_{n+1}^3}-\frac{(4+\alpha)^3}{\gamma_{n+1}^3}\right).
\end{align*}
As in the proof of Proposition \ref{main}, noting that $30+18\alpha+3\alpha^2\geq 3$, we have that $(M_n)_{n\in\nn}$ is a supermartingale with the appropriate variance properties, meaning that Propositions \ref{main} and \ref{not-middle} apply as in the standard model. For $\alpha>0$, however, the proof of Proposition \ref{not-edge} does not translate to this setting, since the degree of a given vertex grows as $\gamma_n^{1/(2+\alpha/2)}$.

If $M=0$ then $\{(x,y,z)\in\Delta^2: xyz=0\}$ is a chain transitive set, but the stationary points at the corners of the triangle are also chain transitive sets.  However, we can prove that the corners are limits with probability zero.  Without loss of generality, assume $(X_n,Y_n,Z_n)\to (1,0,0)$.  Then, for $n$ sufficiently large $X_n>1/2$, meaning that conditional on $\salj_n$ the probability that vertex $n+1$ is of type $2$ (paper), $Y_n^2+2X_nY_n>Y_n$.  Consequently we can bound $Y_n$ below by the proportion of black balls in a coupled standard P\'{o}lya urn process, showing that $\pr(Y_n\to 0)=0$ on the event $X_n>1/2$ for $n$ large enough.  Hence $\pr(L(X,Y,Z)=(1,0,0))=0$. Thus we have the following slight weakening of Theorem \ref{summary} for this setting.
\begin{theorem}For affine preferential attachment, the process $(M_n)_{n\in\nn}$ almost surely converges to a limit $M\in[0,1/27)$, and the distribution of $M$ has full support on $(0,1/27)$.  Furthermore, almost surely $(X_n,Y_n,Z_n)$ fails to converge; rather its limit set is $\{(x,y,z)\in\Delta^2:xyz=M\}$.\end{theorem}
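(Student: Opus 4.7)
The plan is to recycle the arguments behind Theorem \ref{summary} wherever possible and address only the genuinely new issue, namely the possibility $M=0$. The supermartingale identity displayed just above has the same structural form as \eqref{mg}: the multiplicative factor is $1-\Theta(\gamma_n^{-2})$ with absolutely summable deterministic remainder, and since each step still adds $4$ edges the increment $M_{n+1}-M_n$ remains $O(\gamma_n^{-1})$. So the Doob-type decomposition $\tilde M_n=M_n+\sum_{k\le n}R_k$ gives a bounded martingale with conditional variance $O(\gamma_n^{-2})$, which converges a.s.\ to some $\tilde M$, and hence $M_n\to M\in[0,1/27]$ a.s. The full-support claim then follows from the argument of Proposition \ref{main} verbatim, since it only uses $\Var(\tilde M\mid\salj_n)\to 0$ together with the uniform per-step bound.

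The upper-bound claim $M<1/27$ transfers via Proposition \ref{not-middle} and Lemma \ref{middle-variance}. Lemma \ref{middle-variance} only uses the second-order Taylor expansion $1/X_n=3+O(\gamma_n^{-1/2})$ near the interior fixed point and the leading-order structure of $\ee(M_{n+1}\gamma_{n+1}^3\mid\salj_n)$; the $\alpha$-dependent modifications change only the constants, so the Azuma--Hoeffding estimate controlling the excursion of $M_n$ near $1/27$ goes through without change.

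For the limit set I would invoke Theorem 5.7 of \cite{benaim}: $L(X,Y,Z)$ is a.s.\ a chain transitive set of the flow $P$, which is unchanged by the affine modification (only the noise and compensator depend on $\alpha$). Conditional on $M\in(0,1/27)$, the level curve $\{xyz=M\}$ is a periodic orbit and hence its own only chain transitive subset, exactly as in the proof of Theorem \ref{summary}. The new case is $M=0$; on $\{xyz=0\}\cap\Delta^2$ the flow pushes each edge monotonically from one corner to the next in cyclic order, so the chain transitive subsets are the three corners and the whole boundary. The main obstacle is ruling out the corner limits, since the proof of Proposition \ref{not-edge} no longer works: for $\alpha>0$ vertex degrees grow only as $\gamma_n^{1/(2+\alpha/2)}$, too slowly to force $M_n=\Omega(\gamma_n^{-1})$.

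This is where I would use the P\'olya-urn coupling sketched in the paragraph above the theorem. If $(X_n,Y_n,Z_n)\to(1,0,0)$, then $X_n>1/2$ for all sufficiently large $n$, and at any such time the conditional probability that the new vertex is paper equals $Y_n^2+2X_nY_n>Y_n$ (since $X_n>Z_n$). Consequently the paper-count process stochastically dominates the count of one colour in a coupled standard P\'olya urn, whose proportion converges a.s.\ to a strictly positive limit. Thus $Y_n\to0$ has conditional probability zero on $\{X_n>1/2\text{ eventually}\}$, contradicting convergence to the corner. By the cyclic symmetry of the model the same argument excludes the other two corners, so when $M=0$ the only possible limit set is the whole boundary $\{xyz=0\}$, in agreement with the stated formula.
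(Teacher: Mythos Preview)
Your proof is correct and follows the paper's approach essentially line for line: Propositions \ref{main} and \ref{not-middle} carry over with only constants changed, Proposition \ref{not-edge} fails for $\alpha>0$, and the P\'olya-urn comparison rules out the corners, leaving the full boundary loop as the only chain-transitive candidate when $M=0$. One small correction: the drift field is not literally unchanged by the affine modification but is rescaled by the positive factor $(2+\alpha)/2$, which leaves trajectories and hence chain-transitive sets intact, so your conclusion is unaffected.
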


\section{Pick random visible type}

We now consider another natural, simple rule for choosing types; instead of copying the type of a random neighbour, as in the linear model, we choose uniformly at random between types present in the neighbourhood. This method gives common types slightly less advantage than the linear model, and instead of almost sure convergence to a random limit, here we obtain almost sure convergence to a deterministic limit.

\begin{theorem}Suppose we have $N\geq 2$ types and each new vertex chooses $m\geq 3$ neighbours, and adopts a type chosen uniformly at random from those present among its neighbours. Then the proportion of each type converges almost surely to $1/N$.\end{theorem}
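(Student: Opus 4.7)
The plan is to apply the framework of \cite{AMR}: their Lemma 3.4 gives that the vector $\mathbf{x}_n$ of normalised total degrees satisfies \eqref{stochapprox} with $P_i(\mathbf{x})=\frac{m}{2}(q_i(\mathbf{x})-x_i)$, where $q_i(\mathbf{x})=\ee\bigl[\mathbf{1}[i\in S]/|S|\bigr]$ and $S$ is the random support of $m$ iid samples from $\mathbf{x}$. Since $q$ is equivariant under permutations of types, $\mathbf{u}=(1/N,\ldots,1/N)$ is a fixed point of $P$, and because the restriction of $P$ to each face $\{x_j=0:j\notin K\}$ coincides with the same model on $|K|$ types, induction on $|K|$ shows that the only fixed points of $P$ on $\Delta^{N-1}$ are the uniform distributions $\mathbf{u}_K$ on non-empty $K\subseteq[N]$.

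Next I would linearise to verify the stability picture. Symmetry forces $DP|_\mathbf{u}$ to act as a single scalar on the tangent space; using the formula $\partial_j q_i|_\mathbf{u}=N\,\ee\bigl[\mathbf{1}[i\in S]N_j/|S|\bigr]$ and Euler's identity $\sum_j\partial_j q_i|_\mathbf{u}=m$ (from degree-$m$ homogeneity of $q_i$), this scalar equals $-\frac{m(N+m-1-mN\,\ee[1/|\bar S|])}{2(N-1)}$, with $\bar S$ the support of $m$ iid uniform draws from $[N]$. Stability of $\mathbf{u}$ thus reduces to $mN\,\ee[1/|\bar S|]<N+m-1$, which is equality at $m=2$ and strict for $m\geq 3$; this can be proved by induction on $m$ using the recursion $\ee[1/|\bar S_{m+1}|]=\frac{N+1}{N}\,\ee[1/(|\bar S_m|+1)]$ obtained by conditioning on whether the new draw is a repeat. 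At $\mathbf{u}_K$ with $|K|<N$, expanding $P_i$ in the direction of an absent type $i\notin K$ produces leading drift $(mC_K-1)x_i$ with $C_K=\ee[1/(|\bar S'|+1)]>1/m$ for $m\geq 3$ (where $\bar S'$ is the support of $m-1$ iid uniform draws from $K$), so each boundary fixed point is unstable transverse to its face.

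Global convergence then follows by combining Bena\"{\i}m's theorem that the limit set of $\mathbf{x}_n$ is almost surely a chain-recurrent set of $P$ with a strict Lyapunov function argument to reduce such sets to individual fixed points, and Pemantle's non-convergence-to-unstable-equilibria results~\cite{pemantlesurvey} to rule out $\mathbf{u}_K$ with $|K|<N$ (the noise has positive variance transverse to each face since all types are initially represented). A natural Lyapunov candidate is $V(\mathbf{x})=\sum_i x_i^2$, for which $\dot V=m\,\ee[\bar x_S-x_D]$, where $\bar x_S$ is the average of the $x_j$ over the distinct types in the $m$ draws and $x_D$ is the $x$-value of a single random draw. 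The inequality $\ee[\bar x_S]\leq\ee[x_D]$ is equivalent to the majorisation $\mathbf{x}\succ\mathbf{q}(\mathbf{x})$, saying that $\mathbf{q}$ is a smoothed version of $\mathbf{x}$; this should follow from the representation $\mathbf{q}(\mathbf{x})=\ee_S[\mathbf{u}_S]$ as an average of uniformisations over random supports, combined with the positive correlation of $N_j$ and $x_j$ under the conditional distribution of counts given the support.

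The main obstacle I anticipate is the global Lyapunov step: the local eigenvalue inequality is an essentially one-variable arithmetic claim, and applying Bena\"{\i}m and Pemantle is routine once strict local stability is known, but the majorisation $\mathbf{x}\succ\mathbf{q}(\mathbf{x})$ is a genuinely multivariate assertion whose pointwise analogue --- that each $\mathbf{u}_S$ itself is majorised by $\mathbf{x}$ --- fails in general, so the proof must exploit the averaging over $S$, likely via a carefully set up coupling or conditional-expectation argument.
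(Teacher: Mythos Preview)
Your route is quite different from the paper's. The paper never attempts a global analysis of the $(N-1)$-dimensional flow; instead it fixes one type, say $N$, merges all others into a single ``supertype'', and couples the original process with a two-type comparison process. The coupling rests on a separate combinatorial lemma: among all distributions on $[n]$, the uniform one maximises the probability that $m$ i.i.d.\ samples hit at least $k$ distinct values. This lets them replace the random number of non-$N$ types in the neighbourhood by its worst case, yielding a one-dimensional stochastic approximation whose drift $f(y)$ they show is strictly positive on $(0,1/N)$ by a direct comparison with the linear model. The boundary $Y_n\to 0$ is then excluded by a P\'olya-urn domination. So the paper sidesteps both the Lyapunov question and the multidimensional chain-recurrence analysis entirely.

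Your approach has two gaps. The principal one you have already flagged: the Lyapunov step. Note first that $\langle\mathbf{x},\mathbf{q}(\mathbf{x})\rangle\le\|\mathbf{x}\|^2$ is \emph{not} equivalent to the majorisation $\mathbf{x}\succ\mathbf{q}(\mathbf{x})$; majorisation is strictly stronger (it implies your inequality via $\|\mathbf{q}\|\le\|\mathbf{x}\|$ and Cauchy--Schwarz, but not conversely), so reducing to it makes your task harder, not easier. More importantly, without a strict Lyapunov function you cannot conclude that chain-recurrent sets are equilibria --- the rock--paper--scissors half of this very paper shows that symmetric multitype models in this framework can have invariant closed curves, so this step is not a formality and you do not currently have a proof of convergence. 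The second, smaller gap is your appeal to Pemantle's non-convergence theorem at the boundary equilibria $\mathbf{u}_K$: those results need noise of order bounded below along the unstable direction, but the variance in the $x_i$-direction for $i\notin K$ is $O(x_i/n)$ and vanishes as $x_i\to 0$. The paper handles the analogous issue with an explicit P\'olya-urn coupling once the drift is known to be nonnegative for small $y$; you would need a similar ad hoc argument here rather than a black-box citation.
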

\begin{remark}If $m=2$ then this model reduces to the linear model of \cite{AMR}.\end{remark}
\begin{proof}Write $\hier in$ for the proportion of edge-ends from vertices of type $i$. It is sufficient to show that $\liminf_{n\to\infty}\hier Nn\geq 1/N$ almost surely, since by symmetry of the model the same will apply to all other types, implying that $\hier in\to 1/N$; convergence of the proportions of vertices follows by considering the vertices added once $\abs{\hier in-1/N}<\eps$ for each $i$.

We couple the process with a two-type process as follows. Treat all types other than type $N$ as indistinguishable, forming a single supertype $*$, and let $Y_n$ be the proportion of edge-ends of type $N$ at time $n$. Join each new vertex to $m$ vertices as before.  If each of the chosen vertices has the same type, assign that type to the new vertex. Otherwise, if $k$ vertices are chosen from type $*$ and $m-k$ from type $N$ with $0<k<m$, sample $k$ independent variables from the uniform distribution on $\{1,\ldots,n-1\}$ and let $Z(k)$ be the number of different values seen; now assign type $N$ to the new vertex with probability $\frac1{Z(k)+1}$.

By Lemma \ref{other-types-equal} below, for every $k$ and $j$ we have
\[\pr(Z(k)\geq j)\geq\pr(A_{n+1}\geq j\mid B_{n+1}=k,\salj_n),\]
where $A_{n+1}$ is the number of types other than $N$ among neighbours of $v_{n+1}$ in the original process,
and $B_{n+1}$ is the number of neighbours not of type $N$; it follows that
\[\ee\Bigl(\frac1{Z(k)+1}\Bigr)\leq\ee\Bigl(\frac1{A_{n+1}+1}\Bigm| B_{n+1}=k,\salj_n\Bigr).\]
Provided $\hier Nn\geq Y_n$, we have
\begin{align*}\pr(v_{n+1}\text{ has type }N\mid\salj_n)&=\sum_{k=0}^{m-1}\pr(B_{n+1}=k)\ee\Bigl(\frac1{A_{n+1}+1}\Bigm|B_{n+1}=k,\salj_n\Bigr)\\
&\leq\sum_{k=0}^{m-1}\binom mk(1-Y_n)^k(Y_n)^{m-k}\ee\Bigl(\frac1{Z(k)+1}\Bigr),\end{align*}
and so it is possible to couple the two processes such that $\hier Nn\geq Y_n$.
Writing
\[f(y)=\sum_{k=0}^{m-1}\binom mk(1-y)^ky^{m-k}\ee\Bigl(\frac1{Z(k)+1}\Bigr)-y,\]
we have
\[Y_{n+1}-Y_n=\frac{f(Y_n)+\xi_{n+1}}{\gamma_{n+1}},\]
where $\gamma_n$ is the number of edge-ends at time $n$ and $\xi_n$ is a random variable satisfying $\abs{\xi_n}<m+1$ and $\ee(\xi_n\mid\salj_n)=0$. It is straightforward to check that this is a one-dimensional stochastic approximation process satisfying the conditions of Pemantle \cite{pemantlesurvey}, Section 2.4, and hence Corollary 2.7 of \cite{pemantlesurvey} implies that $Y_n$ converges to a zero of $f$.

We claim that $f(y)>0$ for $0<y<1/N$. To see this, note that $f(y)$ is the difference in probability of the new vertex selecting type $N$ in the linear model (copying the type of a random neighbour) over this model, assuming that the proportion of type-$N$ edge ends is $y$, and proportions of other types are equal. We condition on the types represented in the neighbourhood; the only cases which contribute are those where $N$ is represented. Given that type $N$ and $k$ specified other types are represented, the expected number of neighbours of type $N$ is at most that of each other type, so is at most $\frac m{k+1}$. Thus the probability of selecting type $N$, given which types are represented, is no greater in the linear model than in this model. This inequality is strict provided $0<k<m-1$ (if $k=m-1$ then necessarily each type is represented exactly once). Since $m\geq 3$ and $N\geq 2$, the inequality is strict in at least one case with positive probability of occurring, and so $f(y)>0$.

Thus $Y_n\to 0$ or $\lim Y_n\geq 1/N$, so it suffices to show that $Y_n\not\to 0$. This follows since if $Y_n\to 0$ then we have $Y_n<1/N$ for $n$ sufficiently large, and since $f(y)\geq0$ if $y\leq1/N$ we can couple to a standard P\'{o}lya urn.
\end{proof}
We conclude by proving the inequality required for the two-type coupling.
\begin{lemma}\label{other-types-equal}Fix $n\geq 1$, $m\geq 0$ and $k\geq 0$, and let $\mathbf p$ be a probability distribution on $[n]$. Then the probability $p_{n,m,k}(\mathbf{p})$ that a sample of $m$ independent variables with distribution $\mathbf p$ includes at least $k$ different elements of $[n]$ is maximised when $\mathbf p=(1/n,\ldots,1/n)$, and moreover when $n,m\geq k\geq 2$ that is the unique maximiser.\end{lemma}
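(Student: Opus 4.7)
The plan is a two-coordinate averaging argument combined with compactness. Fix $i\neq j$, write $s=p_i+p_j$ and $q=p_i/s$, and regard $p_{n,m,k}(\mathbf p)$ as a function of $q$ with $s$ and all other coordinates held fixed. I would first condition on the pair $(a,J)$, where $a\in\{0,\ldots,m\}$ is the number of draws landing in $\{i,j\}$ and $J\subseteq[n]\setminus\{i,j\}$ is the distinct-value set produced by the remaining $m-a$ draws. The joint law of $(a,J)$ depends on $\mathbf p$ only through $s$ and the other $p_\ell$, not on the split $q$, so every dependence on $q$ lives in the conditional probabilities $\pr(|S|\geq k\mid a,J)$, where $S$ is the full distinct-value set.

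A short case analysis shows that $\pr(|S|\geq k\mid a,J)$ equals $1$ when $|J|\geq k$, or when $|J|=k-1$ and $a\geq 1$; equals $0$ when $|J|\leq k-3$, or $|J|=k-2$ and $a\leq 1$, or $|J|=k-1$ and $a=0$; and in the single remaining case $|J|=k-2$, $a\geq 2$ equals $1-q^a-(1-q)^a$. For $a\geq 2$ the map $q\mapsto 1-q^a-(1-q)^a$ is strictly concave on $[0,1]$ and symmetric about $q=1/2$, hence uniquely maximised there. Consequently, with $s$ and the other coordinates fixed, $p_{n,m,k}(\mathbf p)$ is weakly maximised at $p_i=p_j$, and strictly so whenever $\pr(|J|=k-2,\,a\geq 2)>0$.

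Next, $p_{n,m,k}$ is a symmetric polynomial on the compact simplex, and the pair-averaging step never decreases its value; iterating such moves starting from any $\mathbf p$ yields a sequence converging to $(1/n,\ldots,1/n)$, so by continuity the uniform distribution is a maximiser. For the strict uniqueness claim under $n,m\geq k\geq 2$, I would show that every non-uniform $\mathbf p$ admits a pair $(i,j)$ with $p_i\neq p_j$ for which $\pr(|J|=k-2,\,a\geq 2)>0$, so that averaging that pair strictly improves $p_{n,m,k}$. Any candidate maximiser must have support of size at least $k$, since otherwise $p_{n,m,k}(\mathbf p)=0$ while the uniform distribution gives a positive value. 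If $\mathbf p$ is non-uniform on its support, take $i,j$ in the support with $p_i\neq p_j$; otherwise $\mathbf p$ is uniform on a proper subset, and I take $i$ in the subset and $j$ outside. In either case $p_i+p_j>0$ and $m\geq 2$ give $\pr(a\geq 2)>0$, and $n,m\geq k$ ensures positive probability that the remaining $m-2$ draws yield exactly $k-2$ distinct values from $[n]\setminus\{i,j\}$.

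The main obstacle I expect is bookkeeping: carrying out the full case split for $\pr(|S|\geq k\mid a,J)$, and verifying that the ``interesting'' event $|J|=k-2$, $a\geq 2$ has positive probability in both flavours of non-uniform $\mathbf p$. The concavity input itself is a one-line second-derivative calculation.
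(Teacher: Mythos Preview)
Your argument is correct and takes a genuinely different route from the paper's. The paper proceeds by induction on $n$: it conditions on the number of draws equal to a single fixed index, writes $p_{n,m,k}(\mathbf p)$ as a positive combination of terms $p_{n-1,m-j,\cdot}(\mathbf q)$ in the conditional distribution $\mathbf q$ on the remaining $n-1$ coordinates, and applies the inductive hypothesis to $\mathbf q$; the base case $n=2$ is exactly your calculation $1-p_1^m-(1-p_1)^m$. Your proof instead conditions on a \emph{pair} of coordinates, isolates the only $q$-dependent conditional probability as $1-q^a-(1-q)^a$, and uses its strict concavity to get a one-shot two-coordinate averaging (Schur--concavity) argument, with compactness replacing induction. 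The trade-off: the paper's induction is shorter to write and needs no limiting argument, while your approach is more conceptual, avoids induction entirely, and makes explicit why the uniform law is extremal (it is the Schur-minimal point and $p_{n,m,k}$ is Schur-concave).

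Two places where your write-up should be tightened. First, ``iterating pair-averagings converges to $(1/n,\dots,1/n)$'' deserves a line of justification: either invoke Hardy--Littlewood--P\'olya (the uniform vector is majorised by every $\mathbf p$, hence reachable by finitely many $T$-transforms, each of which your concavity argument covers), or note that averaging the current max and min reduces $\sum_i(p_i-1/n)^2$ by at least a fixed proportion. Second, your positivity check for the event $\{|J|=k-2,\ a\ge2\}$ is phrased via $a=2$, which fails when $k=2$, $m>2$, and there is mass outside $\{i,j\}$ (then any draw outside forces $|J|\ge1$). The clean fix is to take $a=m-(k-2)\ge2$ and send the remaining $k-2$ draws to $k-2$ distinct support points of $[n]\setminus\{i,j\}$; since the support has size $\ge k$ and at most two of $i,j$ lie in it, such points exist. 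This is exactly the ``bookkeeping'' you flagged, and it goes through.
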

\begin{proof}The statement is trivial unless $n,m\geq k\geq 2$ since if $\min(n,m)<k$ then $p_{n,m,k}(\mathbf p)\equiv 0$ and if $n,m\geq k<2$ then $p_{n,m,k}(\mathbf p)\equiv 1$. When $n,m\geq k\geq 2$, we prove that if $\mathbf p=p_1,\ldots,p_n$ is a non-uniform probability distribution then it does not maximise $p_{n,m,k}(\mathbf p)$ by induction on $n$, with base case $n=2$; in this case we have $p_{n,m,k}(\mathbf p)=1-p_1^m-(1-p_1)^m$ and it is easy to see (e.g.\ by calculus) that this is uniquely maximised when $p_1=1/2$.

Suppose $n>2$ but the result holds for smaller values of $n$; note that any distribution with full support gives $p_{n,m,k}(\mathbf p)>0$, and so we may assume that $p_i\neq 1$ for each $i$. If additionally $\mathbf p$ is non-uniform, there exists some $i$ for which $p_i>0$ and the other probabilities are not all the same; without loss of generality, assume $i=n$. We condition on the number of times $n$ appears in the sample, so that
\[p_{n,m,k}(\mathbf p)=(1-p_n)^mp_{n-1,m,k}(\mathbf q)+\sum_{j=1}^m\binom mjp_n^j(1-p_n)^{m-j}p_{n-1,m-j,k-1}(\mathbf q),\]
where $\mathbf q=\bigl(\frac{p_1}{1-p_n},\ldots,\frac{p_{n-1}}{1-p_n}\bigr)$ is the conditional distribution if $n$ is not selected. Applying the induction hypothesis to $\mathbf q$, equalising $p_1,\ldots,p_{n-1}$ will not decrease any term, and will strictly increase at least one term (the initial term if $n>k$ or the $j=1$ term otherwise), so $\mathbf p$ does not maximise $p_{n,m,k}(\mathbf p)$.\end{proof}

\bibliographystyle{amsplain}
\bibliography{scalefree}

\providecommand{\bysame}{\leavevmode\hbox to3em{\hrulefill}\thinspace}
\providecommand{\MR}{\relax\ifhmode\unskip\space\fi MR }
\providecommand{\MRhref}[2]{%
  \href{http://www.ams.org/mathscinet-getitem?mr=#1}{#2}
}
\providecommand{\href}[2]{#2}
\begin{thebibliography}{10}

\bibitem{AMR}
T.~Antunovi{\'{c}}, E.~Mossel, and M.~R{\'{a}}cz, \emph{Coexistence in
  preferential attachment networks}, Combinatorics, Probability and Computing
  \textbf{25} (2016), 797--822.

\bibitem{scalefree1999}
A.-L. Barab{\'a}si and R.~Albert, \emph{Emergence of scaling in random
  networks}, Science \textbf{286} (1999), 509--512.

\bibitem{benaim}
Michel Bena{\"\i}m, \emph{Dynamics of stochastic approximation algorithms},
  S{\'e}minaire de Probabilit{\'e}s, XXXIII \textbf{1709} (1999), 1--68.

\bibitem{bergerpa}
N.~Berger, C.~Borgs, J.~Chayes, and A.~Saberi, \emph{Asymptotic behaviour and
  distributional limits of preferential attachment graphs}, Annals of
  Probability \textbf{42} (2014), 1--40.

\bibitem{dorog}
S.~N. Dorogovtsev, J.~F.~F. Mendes, and A.~N. Samukhin, \emph{Structure of
  growing networks with preferential linking}, Phys. Rev. Lett. \textbf{85}
  (2000), 4633--4636.

\bibitem{vdH}
R.~van~der Hofstad, \emph{Random graphs and complex networks. {V}ol. 1},
  Cambridge University Press, 2017.

\bibitem{itoh}
Y.~Itoh, \emph{On a ruin problem with interaction}, Ann.\ Instit.\ Statst.\
  Math. \textbf{25} (1973), 635--641.

\bibitem{bacteria}
B.~Kerr, M.~A. Riley, M.~W. Feldman, and B.~J.~M. Bohanna, \emph{Local
  dispersal promotes biodiversity in a real-life game of rock-paper-scissors},
  Nature \textbf{418} (2002), 171--174.

\bibitem{pemantlesurvey}
R.~Pemantle, \emph{A survey of random processes with reinforcement},
  Probability Surveys \textbf{4} (2007), 1--79.

\bibitem{side-blotched}
B.~Sinervo and C.~M. Lively, \emph{The rock-paper-scissors game and the
  evolution of alternative male strategies}, Nature \textbf{380} (1996),
  240--243.

\bibitem{RPS-lattice}
G.~Szab\'o, A.~Szolnoki, and R.~Izs\'ak, \emph{Rock-scissors-paper game on
  regular small-world networks}, J. Physics A \textbf{37} (2004), 2599--2609.

\end{thebibliography}
\end{document}